%
%
%
%
\documentclass{amsart}

\newtheorem{theorem}{Theorem}[section]
\newtheorem{lemma}[theorem]{Lemma}
\newtheorem{corollary}[theorem]{Corollary}

\theoremstyle{definition}

\theoremstyle{remark}
\newtheorem{remark}[theorem]{Remark}
\setcounter{MaxMatrixCols}{15}

\usepackage{amsmath}
\usepackage{amssymb}
\usepackage{graphicx}
\usepackage{mathrsfs}
\usepackage{algorithm}
\usepackage{algorithmicx}
\usepackage{appendix}
\usepackage{enumitem}
\usepackage{stmaryrd}
\usepackage{multirow}
\usepackage[colorlinks, linkcolor=blue, anchorcolor=blue, citecolor=blue]{hyperref}
\AtBeginDocument{
                 \label{CorrectFirstPageLabel}
                 
                }

\DeclareMathOperator*{\rank}{rank}
\DeclareMathOperator*{\diag}{diag}

\DeclareMathOperator*{\Span}{span}
\let\s=\scriptscriptstyle

\numberwithin{equation}{section}



\begin{document}

\title[TG for nonsymmetric positive definite systems]{Convergence analysis of two-grid methods for nonsymmetric positive definite systems}

\author{Xuefeng Xu}
\address{School of Mathematics, Southeast University, Nanjing 211189, China}
\email{xuxuefeng@lsec.cc.ac.cn; xuxuefeng@seu.edu.cn}


\subjclass[2020]{Primary 65F08, 65F10, 65N15, 65N55}


\keywords{multigrid methods, two-grid methods, nonsymmetric positive definite systems, convergence factor, inexact coarse solvers}

\begin{abstract}
The convergence theory of multigrid methods for symmetric positive definite systems is well established. For nonsymmetric systems, however, the corresponding theory remains far from mature. Two-grid analysis is fundamental to the design and analysis of multigrid methods. This paper presents a convergence analysis of two-grid methods for nonsymmetric positive definite systems. When the coarse-grid system is solved exactly, we derive a succinct identity for the two-grid convergence factor measured in a smoother-induced norm. More generally, under mild assumptions, we develop a convergence theory for inexact two-grid methods, where convergence is measured in a generic norm.
\end{abstract}

\maketitle



\section{Introduction}

For a large class of linear systems arising from the discretization of partial differential equations, multigrid is a powerful solver with linear or near-linear computational complexity; see, e.g.,~\cite{Hackbusch1985,Briggs2000,Trottenberg2001,Vassilevski2008,XZ2017}. The foundation of multigrid is a two-grid scheme, consisting of two complementary error-reduction processes: \textit{smoothing} (or \textit{relaxation}) and \textit{coarse-grid correction}. Typically, the smoothing process is chosen as a simple iterative method, such as weighted Jacobi or Gauss--Seidel. However, these classical methods are generally effective only at reducing high-frequency (or oscillatory) error modes. The remaining low-frequency (or smooth) modes are then handled by the coarse-grid correction. These two processes are applied iteratively until a desired tolerance is reached.

Most existing multigrid theories require the system matrix to be symmetric positive definite (SPD). For such systems, the convergence theory of multigrid methods is well established in the literature; see, e.g.,~\cite{Xu1992,XZ2002,Falgout2004,Falgout2005,MacLachlan2014,Notay2015,XZ2017,XXF2022-1,XXF2022-2,XXF2025-1,XXF2025-2}. When the coarse-grid system is solved exactly, the energy norm of the two-grid iteration matrix, commonly termed the \textit{convergence factor}, coincides with its spectral radius, provided that the pre- and postsmoothing steps are performed symmetrically. This convergence factor admits a characterization via an identity~\cite{XZ2002,Falgout2005,Zikatanov2008}, which has been widely used in the analysis of two-grid methods; see, e.g.,~\cite{Falgout2005,Brannick2018,XXF2018}. In practice, however, it is often too costly to solve the coarse-grid system exactly. A standard remedy is to apply the two-grid scheme recursively within the correction step, yielding a multigrid method with an inherent multilevel hierarchy. Thus, multigrid can be interpreted as an inexact two-grid scheme. For inexact two-grid theory and its multilevel extensions, we refer the reader to~\cite{XXF2022-1,XXF2022-2,XXF2025-1,XXF2025-2} and the references therein.

Unlike in the SPD setting, multigrid theory for nonsymmetric systems remains far from mature. Most existing algorithms are either heuristic-based or built upon incomplete theoretical foundations. In the nonsymmetric setting, the system matrix often fails to induce a natural norm, and the error propagation matrix of the exact coarse-grid correction is generally an \textit{oblique} projector. The correction process is a primary mechanism for error reduction; in the nonsymmetric case, however, it may actually amplify the error. This renders the development of a complete multigrid theory particularly challenging.

Two-grid analysis plays a fundamental role in the design and analysis of multigrid methods. In the nonsymmetric setting, two key questions arise, as outlined below.

\begin{itemize}[leftmargin=0.8cm]

\item The first question concerns the choice of an appropriate convergence measure. A natural candidate is the spectral radius, but it captures only the asymptotic behavior. That is, the spectral radius may be an unreliable convergence indicator in the non-asymptotic regime. Alternative measures and their associated convergence estimates have been discussed in~\cite{Brezina2010,Lottes2017,Manteuffel2017,Manteuffel2019-1,Manteuffel2019-2,Notay2020,Southworth2024,Krzysik2026} and the references therein. However, some of these measures---such as the $\sqrt{A^{T}A}$- and $\sqrt{AA^{T}}$-norms, where $A$ denotes the system matrix---may be difficult to compute or interpret in practice.

\smallskip

\item The second question concerns the construction of restriction and prolongation matrices (denoted by $R$ and $P$, respectively) such that the correction process does not amplify the error. Assume that $RAP$ is nonsingular, and define
\begin{equation}\label{pi}
\Pi:=P(RAP)^{-1}RA.
\end{equation}
The error propagation matrix of the exact coarse-grid correction is then $I-\Pi$, which is a nonzero projector. For any norm $\|\cdot\|_{X}$ (induced by an SPD matrix $X$), it holds that
\begin{displaymath}
\|I-\Pi\|_{X}\geq 1.
\end{displaymath}
To ensure that the correction process does not amplify the error, it suffices to choose $R$ and $P$ such that
\begin{equation}\label{ortho-proj}
\|I-\Pi\|_{X}=1.
\end{equation}
Such $R$ and $P$ are referred to as \textit{compatible transfer operators} in~\cite{Southworth2024}.

\end{itemize}

This paper presents a convergence analysis of two-grid methods for nonsymmetric positive definite systems. Our main contributions are twofold.

\begin{itemize}[leftmargin=0.8cm]

\item When the coarse-grid system is solved exactly, we use an SPD smoother $M$ and adopt its induced norm as the convergence measure. As shown in Remark~\ref{rmk:proj}, \eqref{ortho-proj} with $X=M$ holds if and only if
\begin{displaymath}
\mathcal{R}(P)=\mathcal{R}(M^{-1}A^{T}R^{T}),
\end{displaymath}
where $\mathcal{R}(\cdot)$ denotes the column space of a matrix. For a given restriction matrix $R$, we take $M^{-1}A^{T}R^{T}$ as the prolongation matrix, which can be viewed as a multiplicative perturbation of the classical choice $R^{T}$. Under these settings, we derive an identity for the two-grid convergence factor, which can be used to analyze the optimality of $R$ and the influence of $\mathcal{R}(R^{T})$ on the convergence factor.

\smallskip

\item When the coarse-grid system is solved approximately, we take the generic $X$-norm as the convergence measure and $X^{-1}A^{T}R^{T}$ as the prolongation matrix. Under these settings, we show that if the smoothing process does not amplify the error and the exact two-grid method is convergent, then for any convergent coarse solver, the resulting inexact method is also convergent. Moreover, the uniform convergence of the exact two-grid method carries over to its inexact variant, provided that the relative accuracy parameter of the coarse solver is fixed; see Remark~\ref{rmk:uniform} for details.

\end{itemize}

In addition to the theoretical analysis, we provide a numerical example based on the discretization of the convection-diffusion equation to compare the performance of $R^{T}$ and an approximation of $X^{-1}A^{T}R^{T}$ with $X=(A+A^{T})/2$. Numerical results show that the latter prolongation matrix yields smaller two-grid convergence factors in the convection-dominated regime and leads to a more robust two-grid method.

The rest of this paper is organized as follows. In Section~\ref{sec:pre}, we give a fundamental analysis of two-grid methods. In Section~\ref{sec:TG}, we present an identity for the exact two-grid convergence factor, followed by two applications. In Section~\ref{sec:iTG}, we establish a convergence theory for two-grid methods with inexact coarse solvers. In Section~\ref{sec:numer}, we compare the performance of the classical prolongation matrix $R^{T}$ with that of the new one motivated by our theory via a numerical example. Finally, in Section~\ref{sec:con}, we give some concluding remarks.

\section{Preliminaries} \label{sec:pre}

We first introduce some notation used throughout the paper.

\begin{itemize}[leftmargin=1.85cm]

\item[$I_{n}$:] the $n\times n$ identity matrix (or $I$ when the size is clear from the context).

\item[$\mathcal{R}(\cdot)$:] the column space (or range) of a matrix.

\item[$\mathcal{N}(\cdot)$:] the null space (or kernel) of a matrix.

\item[$\lambda_{\min}(\cdot)$:] the smallest eigenvalue of a matrix with real eigenvalues.

\item[$\lambda_{\max}(\cdot)$:] the largest eigenvalue of a matrix with real eigenvalues.

\item[$\lambda_{i}(\cdot)$:] the $i$th smallest eigenvalue of a matrix with real eigenvalues.

\item[$\lambda(\cdot)$:] the spectrum of a matrix.

\item[$\|\cdot\|_{2}$:] the spectral norm of a matrix.

\item[$\|\cdot\|_{X}$:] the norm induced by an SPD matrix $X\in\mathbb{R}^{n\times n}$: $\|\mathbf{v}\|_{X}:=\sqrt{\mathbf{v}^{T}X\mathbf{v}}$ for $\mathbf{v}\in\mathbb{R}^{n}$; $\|Y\|_{X}:=\max_{\mathbf{v}\in\mathbb{R}^{n}\backslash\{0\}}\frac{\|Y\mathbf{v}\|_{X}}{\|\mathbf{v}\|_{X}}$ for $Y\in\mathbb{R}^{n\times n}$.

\end{itemize}

Let $A\in\mathbb{R}^{n\times n}$ be nonsymmetric but positive definite (that is, $\mathbf{v}^{T}A\mathbf{v}>0$ for all $\mathbf{v}\in\mathbb{R}^{n}\backslash\{0\}$), and let $\mathbf{f}\in\mathbb{R}^{n}$. Consider solving the linear system
\begin{equation}\label{system}
A\mathbf{u}=\mathbf{f}.
\end{equation}
It is straightforward to verify that $A$ is positive definite if and only if its symmetric part, $\frac{1}{2}(A+A^{T})$, is positive definite.

\begin{remark}
Let
\begin{displaymath}
A=D+L+U,
\end{displaymath}
where $D$, $L$, and $U$ denote the diagonal, strictly lower triangular, and strictly upper triangular parts of $A$, respectively. It is easy to see that $D$, $(D+L)^{T}D^{-1}(D+L)$, $(D+L)D^{-1}(D+L)^{T}$, $(D+U)^{T}D^{-1}(D+U)$, $(D+U)D^{-1}(D+U)^{T}$, and their weighted variants are all SPD.
\end{remark}

To describe two-grid methods, we make the following basic assumptions.

\begin{itemize}[leftmargin=0.8cm]

\item Let $M\in\mathbb{R}^{n\times n}$ be an SPD smoother such that $\|I-M^{-1}A\|_{M}\leq 1$.

\item Let $R\in\mathbb{R}^{n_{\rm c}\times n}$ be a restriction matrix of full row rank, where $n_{\rm c} \ (<n)$ denotes the number of coarse variables.

\item Let $P\in\mathbb{R}^{n\times n_{\rm c}}$ be a prolongation (or interpolation) matrix of rank $n_{\rm c}$.

\item Assume that the coarse-grid matrix $A_{\rm c}:=RAP$ is nonsingular.

\end{itemize}
Under the above assumptions, a two-grid method for solving~\eqref{system} is described by Algorithm~\ref{alg:TG}, in which the coarse-grid system $A_{\rm c}\mathbf{e}_{\rm c}=\mathbf{r}_{\rm c}$ is solved exactly.
\begin{algorithm}[!htbp]

\caption{\ Exact two-grid method.}\label{alg:TG}

\smallskip

\begin{algorithmic}[1]

\State Smoothing: $\mathbf{u}^{(1)}\gets\mathbf{u}^{(0)}+M^{-1}\big(\mathbf{f}-A\mathbf{u}^{(0)}\big)$ \Comment{$\mathbf{u}^{(0)}\in\mathbb{R}^{n}$ is an initial guess}

\smallskip

\State Restriction: $\mathbf{r}_{\rm c}\gets R\big(\mathbf{f}-A\mathbf{u}^{(1)}\big)$

\smallskip

\State Coarse-grid correction: $\mathbf{e}_{\rm c}\gets A_{\rm c}^{-1}\mathbf{r}_{\rm c}$

\smallskip

\State Prolongation: $\mathbf{u}_{\rm\s TG}\gets\mathbf{u}^{(1)}+P\mathbf{e}_{\rm c}$

\smallskip

\end{algorithmic}

\end{algorithm}

Algorithm~\ref{alg:TG} involves two error propagation processes, the first of which is given by
\begin{equation}\label{error-1}
\mathbf{u}-\mathbf{u}^{(1)}=(I-M^{-1}A)\big(\mathbf{u}-\mathbf{u}^{(0)}\big).
\end{equation}
For any SPD matrix $X\in\mathbb{R}^{n\times n}$, we have
\begin{displaymath}
\big\|\mathbf{u}-\mathbf{u}^{(1)}\big\|_{X}\leq\|I-M^{-1}A\|_{X}\big\|\mathbf{u}-\mathbf{u}^{(0)}\big\|_{X}.
\end{displaymath}
If
\begin{displaymath}
\|I-M^{-1}A\|_{X}>1,
\end{displaymath}
then it is possible that
\begin{displaymath}
\big\|\mathbf{u}-\mathbf{u}^{(1)}\big\|_{X}>\big\|\mathbf{u}-\mathbf{u}^{(0)}\big\|_{X}.
\end{displaymath}
To avoid this, we assume that
\begin{displaymath}
\|I-M^{-1}A\|_{X}\leq 1,
\end{displaymath}
which is equivalent to
\begin{equation}\label{smooth-req}
\lambda_{\min}\big(X^{\frac{1}{2}}M^{-1}AX^{-\frac{1}{2}}+X^{-\frac{1}{2}}A^{T}M^{-1}X^{\frac{1}{2}}-X^{\frac{1}{2}}M^{-1}AX^{-1}A^{T}M^{-1}X^{\frac{1}{2}}\big)\geq 0.
\end{equation}
To make the SPD matrix $A+A^{T}$ appear explicitly in~\eqref{smooth-req}, we set
\begin{displaymath}
X^{\frac{1}{2}}M^{-1}=X^{-\frac{1}{2}},
\end{displaymath}
i.e., $X=M$. With this choice, \eqref{smooth-req} reduces to
\begin{displaymath}
\lambda_{\min}\big(M^{-\frac{1}{2}}\widetilde{A}M^{-\frac{1}{2}}\big)\geq 0,
\end{displaymath}
where
\begin{equation}\label{tildA}
\widetilde{A}:=A+A^{T}-AM^{-1}A^{T}.
\end{equation}
Thus, $\|I-M^{-1}A\|_{M}\leq 1$ if and only if $\widetilde{A}$ is positive semidefinite, or, equivalently, $M-A^{T}(A+A^{T})^{-1}A$ is positive semidefinite.

The second error propagation process of Algorithm~\ref{alg:TG} is given by
\begin{equation}\label{error-2}
\mathbf{u}-\mathbf{u}_{\rm\s TG}=(I-\Pi)\big(\mathbf{u}-\mathbf{u}^{(1)}\big),
\end{equation}
where $\Pi$ is defined by~\eqref{pi}. It follows from~\eqref{error-2} that
\begin{displaymath}
\|\mathbf{u}-\mathbf{u}_{\rm\s TG}\|_{M}\leq\|I-\Pi\|_{M}\big\|\mathbf{u}-\mathbf{u}^{(1)}\big\|_{M}.
\end{displaymath}
Since $I-\Pi$ is a nonzero projector, we have
\begin{displaymath}
\|I-\Pi\|_{M}\geq 1.
\end{displaymath}
Thus, a sufficient condition for $\|\mathbf{u}-\mathbf{u}_{\rm\s TG}\|_{M}\leq\big\|\mathbf{u}-\mathbf{u}^{(1)}\big\|_{M}$ is
\begin{displaymath}
\|I-\Pi\|_{M}=1,
\end{displaymath}
which holds if and only if
\begin{equation}\label{cond-P}
\mathcal{R}(P)=\mathcal{R}(M^{-1}A^{T}R^{T});
\end{equation}
see Remark~\ref{rmk:proj} for details. If $R$ and $P$ do not satisfy~\eqref{cond-P}, then
\begin{displaymath}
\|I-\Pi\|_{M}>1,
\end{displaymath}
which may lead to
\begin{displaymath}
\|\mathbf{u}-\mathbf{u}_{\rm\s TG}\|_{M}>\big\|\mathbf{u}-\mathbf{u}^{(1)}\big\|_{M}.
\end{displaymath}
This defeats the purpose of the coarse-grid correction.

\begin{remark}\label{rmk:proj}
For any norm $\|\cdot\|_{X}$, it holds that
\begin{displaymath}
\|I-\Pi\|_{X}\geq 1,
\end{displaymath}
with equality if and only if
\begin{equation}\label{cond-pi}
\Pi=X^{-1}\Pi^{T}X.
\end{equation}
Note that the left-hand side of~\eqref{cond-pi} is a projector along $\mathcal{N}(RA)$ onto $\mathcal{R}(P)$, whereas the right-hand side is a projector along $\mathcal{N}(P^{T}X)$ onto $\mathcal{R}(X^{-1}A^{T}R^{T})$. Hence, \eqref{cond-pi} implies
\begin{equation}\label{cond0-P}
\mathcal{R}(P)=\mathcal{R}(X^{-1}A^{T}R^{T}).
\end{equation}
Conversely, if~\eqref{cond0-P} holds, then
\begin{displaymath}
XP=A^{T}R^{T}W
\end{displaymath}
for some nonsingular matrix $W\in\mathbb{R}^{n_{\rm c}\times n_{\rm c}}$. Thus,
\begin{displaymath}
\mathcal{N}(RA)=\mathcal{N}(W^{T}RA)=\mathcal{N}(P^{T}X).
\end{displaymath}
This, together with the fact that both $\Pi$ and $X^{-1}\Pi^{T}X$ are projectors, yields~\eqref{cond-pi}. Therefore, for any norm $\|\cdot\|_{X}$, $\|I-\Pi\|_{X}=1$ is equivalent to~\eqref{cond0-P}.
\end{remark}

\begin{remark}
If $A$ is SPD, a natural choice is $X=A$. In this case, \eqref{cond0-P} reduces to
\begin{displaymath}
\mathcal{R}(P)=\mathcal{R}(R^{T}),
\end{displaymath}
which justifies the standard choice of prolongation matrices in both geometric and algebraic multigrid methods, namely, $P=\alpha R^{T}$ for some nonzero scalar $\alpha$.
\end{remark}

Combining~\eqref{error-1} and~\eqref{error-2}, we obtain
\begin{displaymath}
\mathbf{u}-\mathbf{u}_{\rm\s TG}=E_{\rm\s TG}\big(\mathbf{u}-\mathbf{u}^{(0)}\big),
\end{displaymath}
where $E_{\rm\s TG}$, called the \textit{iteration matrix} (or \textit{error propagation matrix}) of Algorithm~\ref{alg:TG}, is given by
\begin{equation}\label{ETG}
E_{\rm\s TG}=(I-\Pi)(I-M^{-1}A).
\end{equation}
It then holds that
\begin{displaymath}
\|\mathbf{u}-\mathbf{u}_{\rm\s TG}\|_{M}\leq\|E_{\rm\s TG}\|_{M}\big\|\mathbf{u}-\mathbf{u}^{(0)}\big\|_{M},
\end{displaymath}
where
\begin{equation}\label{ETG-norm}
\|E_{\rm\s TG}\|_{M}=\big\|\big(I-M^{\frac{1}{2}}\Pi M^{-\frac{1}{2}}\big)\big(I-M^{-\frac{1}{2}}AM^{-\frac{1}{2}}\big)\big\|_{2}.
\end{equation}
The quantity $\|E_{\rm\s TG}\|_{M}$ is referred to as the \textit{$M$-convergence factor} of Algorithm~\ref{alg:TG}.

\section{Convergence analysis of Algorithm~\ref{alg:TG}} \label{sec:TG}

Motivated by~\eqref{cond-P}, we define
\begin{displaymath}
P_{\s M}:=M^{-1}A^{T}R^{T},
\end{displaymath}
which can be viewed as a \textit{left multiplicative perturbation} of the classical choice $R^{T}$. It is easy to see that $P_{\s M}$ satisfies the relative error estimate
\begin{displaymath}
\frac{\|R^{T}-P_{\s M}\|_{M}}{\|R^{T}\|_{M}}\leq\|I-M^{-1}A^{T}\|_{M}=\|I-M^{-1}A\|_{M}\leq 1.
\end{displaymath}

In this section, we study the convergence of Algorithm~\ref{alg:TG} with prolongation matrix $P_{\s M}$. Specifically, we first present an identity for $\|E_{\rm\s TG}\|_{M}$, and then discuss its use in deriving a class of optimal restriction matrices and in analyzing the influence of $\mathcal{R}(R^{T})$ on $\|E_{\rm\s TG}\|_{M}$.

We first prove a technical lemma, which provides a necessary and sufficient condition for $\|E_{\rm\s TG}\|_{M}<1$; see the proof of Theorem~\ref{thm:identity} for details.

\begin{lemma}\label{lem:null}
Let $\Pi$ and $\widetilde{A}$ be defined by~\eqref{pi} and~\eqref{tildA}, respectively. Then
\begin{equation}\label{cond-tA}
\mathcal{N}(\widetilde{A})\cap\mathcal{N}(RA)=\{0\}
\end{equation}
if and only if
\begin{equation}\label{rank-n-nc}
\rank\big(\widetilde{A}^{\frac{1}{2}}(I-\Pi)\big)=n-n_{\rm c}.
\end{equation}
\end{lemma}

\begin{proof}
We first observe that~\eqref{rank-n-nc} is equivalent to
\begin{equation}\label{equal-null}
\mathcal{N}\big(\widetilde{A}^{\frac{1}{2}}(I-\Pi)\big)=\mathcal{N}(I-\Pi).
\end{equation}
Thus, it suffices to show that~\eqref{cond-tA} is equivalent to~\eqref{equal-null}.

``\eqref{cond-tA}\,$\Rightarrow$\,\eqref{equal-null}'': For any $\mathbf{x}\in\mathcal{N}\big(\widetilde{A}^{\frac{1}{2}}(I-\Pi)\big)$, we have
\begin{displaymath}
\widetilde{A}(I-\Pi)\mathbf{x}=0,
\end{displaymath}
which implies $(I-\Pi)\mathbf{x}\in\mathcal{N}(\widetilde{A})$. Note that 
\begin{displaymath}
(I-\Pi)\mathbf{x}\in\mathcal{N}(\Pi)=\mathcal{N}(RA).
\end{displaymath}
If~\eqref{cond-tA} holds, then $\mathbf{x}\in\mathcal{N}(I-\Pi)$. The arbitrariness of $\mathbf{x}$ implies
\begin{displaymath}
\mathcal{N}\big(\widetilde{A}^{\frac{1}{2}}(I-\Pi)\big)\subseteq\mathcal{N}(I-\Pi),
\end{displaymath}
which, combined with the reverse inclusion $\mathcal{N}(I-\Pi)\subseteq\mathcal{N}\big(\widetilde{A}^{\frac{1}{2}}(I-\Pi)\big)$, yields~\eqref{equal-null}.

``\eqref{equal-null}\,$\Rightarrow$\,\eqref{cond-tA}'': Assume that there exists a nonzero vector $\mathbf{y}\in\mathcal{N}(\widetilde{A})\cap\mathcal{N}(RA)$, that is,~\eqref{cond-tA} does not hold. Since $\mathbf{y}\neq 0$ and
\begin{displaymath}
\mathcal{N}(RA)=\mathcal{R}(I-\Pi),
\end{displaymath}
we can write $\mathbf{y}=(I-\Pi)\mathbf{z}$ for some $\mathbf{z}\in\mathbb{R}^{n}\backslash\mathcal{R}(P)$, which, together with $\mathbf{y}\in\mathcal{N}(\widetilde{A})$ and $\mathcal{N}(\widetilde{A})=\mathcal{N}\big(\widetilde{A}^{\frac{1}{2}}\big)$, yields
\begin{displaymath}
\widetilde{A}^{\frac{1}{2}}(I-\Pi)\mathbf{z}=0.
\end{displaymath}
Hence, $\mathbf{z}\in\mathcal{N}\big(\widetilde{A}^{\frac{1}{2}}(I-\Pi)\big)$ but $\mathbf{z}\notin\mathcal{N}(I-\Pi)$. This contradicts~\eqref{equal-null}.
\end{proof}

\begin{remark}
Clearly, the relation~\eqref{cond-tA} holds if $\widetilde{A}$ is positive definite, or, equivalently, $\|I-M^{-1}A\|_{M}<1$. In addition, \eqref{cond-tA} implies
\begin{displaymath}
\rank(\widetilde{A})\geq n-n_{\rm c},
\end{displaymath}
since
\begin{displaymath}
\dim\big(\mathcal{N}(\widetilde{A})\big)+\dim\big(\mathcal{N}(RA)\big)=\dim\big(\mathcal{N}(\widetilde{A})+\mathcal{N}(RA)\big)\leq n.
\end{displaymath}
Here, $\dim(\cdot)$ denotes the dimension of a subspace of $\mathbb{R}^{n}$. \textit{Throughout this paper, we assume that~\eqref{cond-tA} holds}.
\end{remark}

Using~\eqref{ETG-norm} and Lemma~\ref{lem:null}, we derive the following convergence identity.

\begin{theorem}\label{thm:identity}
The $M$-convergence factor of Algorithm~{\rm\ref{alg:TG}} with prolongation matrix $P_{\s M}$ can be characterized as
\begin{equation}\label{identity}
\|E_{\rm\s TG}\|_{M}=\sqrt{1-\sigma_{\rm\s TG}}
\end{equation}
with
\begin{displaymath}
\sigma_{\rm\s TG}=\lambda_{n_{\rm c}+1}\big(M^{-1}\widetilde{A}(I-\Pi)\big),
\end{displaymath}
where $\widetilde{A}$ and $\Pi$ are defined by~\eqref{tildA} and~\eqref{pi}, respectively.
\end{theorem}

\begin{proof}
Let
\begin{displaymath}
\Pi_{\s M}=M^{\frac{1}{2}}\Pi M^{-\frac{1}{2}}.
\end{displaymath}
If $P=P_{\s M}$, then
\begin{displaymath}
\Pi=P_{\s M}A_{\rm c}^{-1}RA=P_{\s M}(P_{\s M}^{T}MP_{\s M})^{-1}P_{\s M}^{T}M,
\end{displaymath}
and hence
\begin{displaymath}
\Pi_{\s M}=M^{\frac{1}{2}}P_{\s M}(P_{\s M}^{T}MP_{\s M})^{-1}P_{\s M}^{T}M^{\frac{1}{2}}.
\end{displaymath}
It can be readily verified that
\begin{displaymath}
\Pi_{\s M}^{T}=\Pi_{\s M}=\Pi_{\s M}^{2}.
\end{displaymath}
By~\eqref{ETG-norm}, we have
\begin{align*}
\|E_{\rm\s TG}\|_{M}^{2}&=\big\|(I-\Pi_{\s M})\big(I-M^{-\frac{1}{2}}AM^{-\frac{1}{2}}\big)\big\|_{2}^{2}\\
&=\lambda_{\max}\big(\big(I-M^{-\frac{1}{2}}A^{T}M^{-\frac{1}{2}}\big)(I-\Pi_{\s M})\big(I-M^{-\frac{1}{2}}AM^{-\frac{1}{2}}\big)\big)\\
&=\lambda_{\max}\big(\big(I-M^{-\frac{1}{2}}AM^{-\frac{1}{2}}\big)\big(I-M^{-\frac{1}{2}}A^{T}M^{-\frac{1}{2}}\big)(I-\Pi_{\s M})\big)\\
&=\lambda_{\max}\big(\big(I-M^{-\frac{1}{2}}\widetilde{A}M^{-\frac{1}{2}}\big)(I-\Pi_{\s M})\big)\\
&=1-\lambda_{\min}\big(\Pi_{\s M}+M^{-\frac{1}{2}}\widetilde{A}M^{-\frac{1}{2}}(I-\Pi_{\s M})\big).
\end{align*}

Since $\Pi_{\s M}$ is an $L^{2}$-orthogonal projector of rank $n_{\rm c}$, there is an orthogonal matrix $Q\in\mathbb{R}^{n\times n}$ such that
\begin{displaymath}
Q^{T}\Pi_{\s M}Q=\begin{pmatrix}
I_{n_{\rm c}} & 0 \\
0 & 0
\end{pmatrix}.
\end{displaymath}
Let
\begin{displaymath}
Q^{T}M^{-\frac{1}{2}}\widetilde{A}M^{-\frac{1}{2}}Q=\begin{pmatrix}
X_{1} & X_{2} \\
X_{2}^{T} & X_{3}
\end{pmatrix},
\end{displaymath}
where $X_{1}\in\mathbb{R}^{n_{\rm c}\times n_{\rm c}}$, $X_{2}\in\mathbb{R}^{n_{\rm c}\times(n-n_{\rm c})}$, and $X_{3}\in\mathbb{R}^{(n-n_{\rm c})\times(n-n_{\rm c})}$. Then
\begin{align*}
\begin{pmatrix}
I_{n_{\rm c}}-X_{1} & -X_{2} \\
-X_{2}^{T} & I_{n-n_{\rm c}}-X_{3}
\end{pmatrix}&=Q^{T}\big(I-M^{-\frac{1}{2}}\widetilde{A}M^{-\frac{1}{2}}\big)Q \\
&=Q^{T}\big(I-M^{-\frac{1}{2}}AM^{-\frac{1}{2}}\big)\big(I-M^{-\frac{1}{2}}A^{T}M^{-\frac{1}{2}}\big)Q,
\end{align*}
which, together with the positive semidefiniteness of $Q^{T}M^{-\frac{1}{2}}\widetilde{A}M^{-\frac{1}{2}}Q$, yields
\begin{equation}\label{spe-X3}
\lambda(X_{3})\subset[0,1].
\end{equation}
Direct computation gives
\begin{displaymath}
\Pi_{\s M}+M^{-\frac{1}{2}}\widetilde{A}M^{-\frac{1}{2}}(I-\Pi_{\s M})=Q\begin{pmatrix}
I_{n_{\rm c}} & X_{2} \\
0 & X_{3}
\end{pmatrix}Q^{T}.
\end{displaymath}
Then
\begin{align*}
\|E_{\rm\s TG}\|_{M}^{2}&=1-\lambda_{\min}\big(\Pi_{\s M}+M^{-\frac{1}{2}}\widetilde{A}M^{-\frac{1}{2}}(I-\Pi_{\s M})\big)\\
&=1-\min\big\{1,\,\lambda_{\min}(X_{3})\big\}\\
&=1-\lambda_{\min}(X_{3}).
\end{align*}

Using Lemma~\ref{lem:null} and the relation
\begin{displaymath}
(I-\Pi_{\s M})M^{-\frac{1}{2}}\widetilde{A}M^{-\frac{1}{2}}(I-\Pi_{\s M})=Q\begin{pmatrix}
0 & 0 \\
0 & X_{3}
\end{pmatrix}Q^{T},
\end{displaymath}
we obtain
\begin{align*}
\rank(X_{3})&=\rank\big((I-\Pi_{\s M})M^{-\frac{1}{2}}\widetilde{A}M^{-\frac{1}{2}}(I-\Pi_{\s M})\big)\\
&=\rank\big(\widetilde{A}^{\frac{1}{2}}M^{-\frac{1}{2}}(I-\Pi_{\s M})\big)\\
&=\rank\big(\widetilde{A}^{\frac{1}{2}}(I-\Pi)\big)\\
&=n-n_{\rm c},
\end{align*}
which, combined with~\eqref{spe-X3}, leads to $\lambda_{\min}(X_{3})>0$. Due to
\begin{equation}\label{MtAI-pi}
M^{-\frac{1}{2}}\widetilde{A}M^{-\frac{1}{2}}(I-\Pi_{\s M})=Q\begin{pmatrix}
0 & X_{2} \\
0 & X_{3}
\end{pmatrix}Q^{T},
\end{equation}
it follows that
\begin{displaymath}
\lambda_{\min}(X_{3})=\lambda_{n_{\rm c}+1}\big(M^{-\frac{1}{2}}\widetilde{A}M^{-\frac{1}{2}}(I-\Pi_{\s M})\big).
\end{displaymath}
Thus,
\begin{align*}
\|E_{\rm\s TG}\|_{M}^{2}&=1-\lambda_{n_{\rm c}+1}\big(M^{-\frac{1}{2}}\widetilde{A}M^{-\frac{1}{2}}(I-\Pi_{\s M})\big)\\
&=1-\lambda_{n_{\rm c}+1}\big(M^{-1}\widetilde{A}M^{-\frac{1}{2}}(I-\Pi_{\s M})M^{\frac{1}{2}}\big)\\
&=1-\lambda_{n_{\rm c}+1}\big(M^{-1}\widetilde{A}(I-\Pi)\big),
\end{align*}
which yields~\eqref{identity}.
\end{proof}

The identity~\eqref{identity} serves as a convenient tool for analyzing Algorithm~\ref{alg:TG}. Next, we present two applications of~\eqref{identity}. The first is to derive a class of optimal restriction matrices that minimize the convergence factor $\|E_{\rm\s TG}\|_{M}$.

To analyze the optimality of the restriction matrix, we need the following result, known as the \textit{Poincar\'{e} separation theorem}; see, e.g.,~\cite[Corollary~4.3.37]{Horn2013}.

\begin{lemma}\label{lem:poincare}
Let $H\in\mathbb{C}^{n\times n}$ be Hermitian, and let $\{\mathbf{q}_{k}\}_{k=1}^{m}\subset\mathbb{C}^{n}\,(1\leq m\leq n)$ be a set of orthonormal vectors. Then, for each $i=1,\ldots,m$, it holds that
\begin{displaymath}
\lambda_{i}(H)\leq\lambda_{i}(\check{H})\leq\lambda_{i+n-m}(H),
\end{displaymath}
where $\check{H}=\big(\mathbf{q}_{i}^{\ast}H\mathbf{q}_{j}\big)\in\mathbb{C}^{m\times m}$ and $\mathbf{q}_{i}^{\ast}$ denotes the conjugate transpose of $\mathbf{q}_{i}$.
\end{lemma}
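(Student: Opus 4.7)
My plan is to deduce Lemma~\ref{lem:poincare} from the Courant--Fischer min--max characterization: for a Hermitian matrix $B\in\mathbb{C}^{k\times k}$ with eigenvalues $\lambda_{1}(B)\leq\cdots\leq\lambda_{k}(B)$,
\begin{displaymath}
\lambda_{i}(B)=\min_{\substack{S\subset\mathbb{C}^{k}\\ \dim S=i}}\max_{\substack{\mathbf{x}\in S\\ \|\mathbf{x}\|_{2}=1}}\mathbf{x}^{\ast}B\mathbf{x}=\max_{\substack{S\subset\mathbb{C}^{k}\\ \dim S=k-i+1}}\min_{\substack{\mathbf{x}\in S\\ \|\mathbf{x}\|_{2}=1}}\mathbf{x}^{\ast}B\mathbf{x}.
\end{displaymath}
The strategy is then to apply each of these two formulas once, comparing the set of admissible subspaces on the $\check{H}$ side with that on the $H$ side.

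Concretely, I would stack the orthonormal vectors into $Q=(\mathbf{q}_{1},\ldots,\mathbf{q}_{m})\in\mathbb{C}^{n\times m}$, so that $Q^{\ast}Q=I_{m}$ and $\check{H}=Q^{\ast}HQ$. Because $Q$ is an isometry from $\mathbb{C}^{m}$ onto $\Range(Q)$, the assignment $\mathbf{y}\mapsto Q\mathbf{y}$ preserves the $2$-norm and the Rayleigh quotient: $\mathbf{y}^{\ast}\check{H}\mathbf{y}=(Q\mathbf{y})^{\ast}H(Q\mathbf{y})$ and $\|Q\mathbf{y}\|_{2}=\|\mathbf{y}\|_{2}$. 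Consequently, $j$-dimensional subspaces $T\subset\mathbb{C}^{m}$ are in bijection, via $T\mapsto Q(T)$, with $j$-dimensional subspaces of $\Range(Q)\subset\mathbb{C}^{n}$, which form a (generally proper) subfamily of all $j$-dimensional subspaces of $\mathbb{C}^{n}$.

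For the lower bound $\lambda_{i}(H)\leq\lambda_{i}(\check{H})$, I would apply the min--max form to $\check{H}$ with $j=i$; restricting the outer minimization from all $i$-dimensional subspaces of $\mathbb{C}^{n}$ to only those contained in $\Range(Q)$ can only increase the value, yielding the inequality. For the upper bound $\lambda_{i}(\check{H})\leq\lambda_{i+n-m}(H)$, I would use the dual max--min form: $\lambda_{i}(\check{H})$ equals the maximum, over $(m-i+1)$-dimensional subspaces of $\mathbb{C}^{m}$, of the minimum Rayleigh quotient, while the max--min formula for $\lambda_{i+n-m}(H)$ runs over subspaces of dimension $n-(i+n-m)+1=m-i+1$. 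Restricting this outer maximization to subspaces lying in $\Range(Q)$ can only decrease the value, which produces the desired bound.

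I do not anticipate a serious obstacle: the whole argument is a single application of Courant--Fischer combined with the ``restricted subfamily'' comparison. The only mildly delicate point is the index bookkeeping between $\check{H}$ and $H$, namely aligning the $i$-th eigenvalue on the $m$-side with the $(i+n-m)$-th one on the $n$-side so that both characterizations genuinely range over subspaces of matching dimension before the subfamily argument is invoked.
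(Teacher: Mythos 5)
Your argument is correct. Note that the paper does not prove this lemma at all: it is quoted as the Poincar\'{e} separation theorem with a citation to Horn and Johnson (Corollary~4.3.37), so there is no in-paper proof to compare against. Your Courant--Fischer derivation is the standard textbook proof of that result: writing $\check{H}=Q^{\ast}HQ$ with $Q^{\ast}Q=I_{m}$, using that $Q$ is an isometry to identify $j$-dimensional subspaces of $\mathbb{C}^{m}$ with $j$-dimensional subspaces of $\Range(Q)$, and then invoking the min--max form for the lower bound and the max--min form for the upper bound via the ``restricted subfamily'' comparison. The index bookkeeping is also right: the max--min characterization of $\lambda_{i+n-m}(H)$ ranges over subspaces of dimension $n-(i+n-m)+1=m-i+1$, matching the dimension on the $\check{H}$ side, and $i+n-m\leq n$ holds since $i\leq m$.
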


The following theorem provides an optimal restriction theory.

\begin{theorem}
Let $\widetilde{A}$ be defined by~\eqref{tildA}, and let $\{(\mu_{i},\mathbf{v}_{i})\}_{i=1}^{n}$ be the eigenpairs of the generalized eigenvalue problem
\begin{displaymath}
\widetilde{A}\mathbf{v}=\mu M\mathbf{v},
\end{displaymath}
where
\begin{displaymath}
0\leq\mu_{1}\leq\mu_{2}\leq\cdots\leq\mu_{n}\leq 1 \quad \text{and} \quad \mathbf{v}_{i}^{T}M\mathbf{v}_{j}=\begin{cases}
1 & \text{if $i=j$},\\
0 & \text{if $i\neq j$}.
\end{cases}
\end{displaymath}
Then
\begin{displaymath}
\|E_{\rm\s TG}\|_{M}\geq\sqrt{1-\mu_{n_{\rm c}+1}},
\end{displaymath}
with equality if $\mathcal{N}(R)=\Span\{A\mathbf{v}_{n_{\rm c}+1},\ldots,A\mathbf{v}_{n}\}$.
\end{theorem}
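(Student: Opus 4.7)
The plan is to reduce the claim to an upper bound on $\sigma_{\rm TG}=\lambda_{\min}^{+}(M^{-1}\widetilde{A}(I-\varPi_{A}))$ via Poincar\'{e} interlacing applied to a symmetrized version of the operator. By the identity~\eqref{identity} of Theorem~\ref{thm:identity}, the desired bound $\|E_{\rm TG}\|_{M}\geq\sqrt{1-\mu_{n_{\rm c}+1}}$ is equivalent to $\sigma_{\rm TG}\leq\mu_{n_{\rm c}+1}$, and the equality statement is equivalent to $\sigma_{\rm TG}=\mu_{n_{\rm c}+1}$ whenever $\Null(RA)=\Span\{\mathbf{v}_{n_{\rm c}+1},\ldots,\mathbf{v}_{n}\}$.

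First I would symmetrize the problem exactly as in the proof of Theorem~\ref{thm:identity}. Set $B:=M^{-\frac{1}{2}}\widetilde{A}M^{-\frac{1}{2}}$ and $\varPi:=M^{\frac{1}{2}}\varPi_{A}M^{-\frac{1}{2}}$, so that $B(I-\varPi)=M^{\frac{1}{2}}\bigl(M^{-1}\widetilde{A}(I-\varPi_{A})\bigr)M^{-\frac{1}{2}}$ is similar to $M^{-1}\widetilde{A}(I-\varPi_{A})$, giving $\sigma_{\rm TG}=\lambda_{\min}^{+}(B(I-\varPi))$. The matrix $B$ is symmetric positive semidefinite, and the vectors $\mathbf{w}_{i}:=M^{\frac{1}{2}}\mathbf{v}_{i}$ form a standard orthonormal eigenbasis of $B$ with eigenvalues $\mu_{1}\leq\cdots\leq\mu_{n}$. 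Moreover, since $\varPi$ is an orthogonal projection (proven in Theorem~\ref{thm:identity}), $I-\varPi$ is the Euclidean orthogonal projection onto the $(n-n_{\rm c})$-dimensional subspace $S:=M^{\frac{1}{2}}\Null(RA)$.

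Next, pick any orthonormal basis $\{\mathbf{q}_{1},\ldots,\mathbf{q}_{n-n_{\rm c}}\}$ of $S$ and let $\check{B}:=(\mathbf{q}_{i}^{T}B\mathbf{q}_{j})_{i,j=1}^{n-n_{\rm c}}$ be the compression of $B$ to $S$. By the block-diagonalization argument of Theorem~\ref{thm:identity} (where this $\check{B}$ plays the role of $X_{3}$), Lemma~\ref{lem:null} together with assumption~\eqref{cond-tA} forces $\check{B}$ to be SPD and yields $\sigma_{\rm TG}=\lambda_{\min}(\check{B})=\lambda_{1}(\check{B})$. Applying Lemma~\ref{lem:poincare} to the Hermitian matrix $H=B$ with $m=n-n_{\rm c}$ and $i=1$ then gives
\begin{displaymath}
\lambda_{1}(\check{B})\leq\lambda_{1+n-(n-n_{\rm c})}(B)=\lambda_{n_{\rm c}+1}(B)=\mu_{n_{\rm c}+1},
\end{displaymath}
which establishes $\sigma_{\rm TG}\leq\mu_{n_{\rm c}+1}$ and hence the asserted lower bound on $\|E_{\rm TG}\|_{M}$.

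Finally, for the equality claim, if $\Null(RA)=\Span\{\mathbf{v}_{n_{\rm c}+1},\ldots,\mathbf{v}_{n}\}$, then $S=\Span\{\mathbf{w}_{n_{\rm c}+1},\ldots,\mathbf{w}_{n}\}$ and the natural choice $\mathbf{q}_{k}:=\mathbf{w}_{n_{\rm c}+k}$ produces $\check{B}=\diag(\mu_{n_{\rm c}+1},\ldots,\mu_{n})$, whence $\lambda_{1}(\check{B})=\mu_{n_{\rm c}+1}$ and the bound is attained. The main obstacle is really the first step: recognizing that the smallest positive eigenvalue of the \emph{nonsymmetric} operator $M^{-1}\widetilde{A}(I-\varPi_{A})$ equals the smallest eigenvalue of a symmetric compression $Q_{S}^{T}BQ_{S}$ of the Hermitian matrix $B$ onto the subspace $M^{\frac{1}{2}}\Null(RA)$; once this reduction is in hand, Poincar\'{e} separation delivers the result immediately and transparently exposes the optimal choice of $\Null(RA)$.
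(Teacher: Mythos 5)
Your proposal is correct and follows essentially the same route as the paper: reduce $\sigma_{\rm TG}$ to the smallest eigenvalue of a symmetric compression of $M^{-\frac{1}{2}}\widetilde{A}M^{-\frac{1}{2}}$ onto the $(n-n_{\rm c})$-dimensional subspace $M^{\frac{1}{2}}\Null(RA)$ (the paper's $X_{3}$, shown SPD via Lemma~\ref{lem:null}), then apply the Poincar\'{e} separation theorem with $i=1$, $m=n-n_{\rm c}$. The only cosmetic difference is that the paper performs the compression in the coordinates of the generalized eigenbasis $V$ (writing $M^{-1}\widetilde{A}(I-\varPi_{A})=V\Lambda U_{2}U_{2}^{T}V^{-1}$ and compressing the diagonal matrix $\Lambda$ by $U_{2}$), which is unitarily equivalent to your direct compression of $B$ by an orthonormal basis of $S$; your treatment of the equality case via $\mathbf{q}_{k}=\mathbf{w}_{n_{\rm c}+k}$ matches the paper's computation of $U_{2}U_{2}^{T}$.
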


\begin{proof}
Since
\begin{displaymath}
\lambda\big(M^{-1}\widetilde{A}\big)=\lambda\big(M^{-\frac{1}{2}}\widetilde{A}M^{-\frac{1}{2}}\big)\subset[0,+\infty)
\end{displaymath}
and $I-M^{-\frac{1}{2}}\widetilde{A}M^{-\frac{1}{2}}$ is symmetric positive semidefinite, it follows that
\begin{displaymath}
0=\mu_{1}=\cdots=\mu_{n-r}<\mu_{n-r+1}\leq\cdots\leq\mu_{n}\leq 1,
\end{displaymath}
where $r=\rank(\widetilde{A})\geq n-n_{\rm c}$ (which implies $\mu_{n_{\rm c}+1}>0$).

Let
\begin{displaymath}
V=(\mathbf{v}_{1},\ldots,\mathbf{v}_{n}) \quad \text{and} \quad U_{1}=V^{-1}P_{\s M}\big(P_{\s M}^{T}V^{-T}V^{-1}P_{\s M}\big)^{-\frac{1}{2}}.
\end{displaymath}
It is straightforward to verify that $V$ is nonsingular with $V^{-1}=V^{T}M$, and that $U_{1}$ has orthonormal columns, i.e., $U_{1}^{T}U_{1}=I_{n_{\rm c}}$. Let $U_{2}\in\mathbb{R}^{n\times(n-n_{\rm c})}$ be such that $(U_{1} \,\ U_{2})$ is orthogonal. Then
\begin{align*}
M^{-1}\widetilde{A}(I-\Pi)&=M^{-1}\widetilde{A}\big(I-P_{\s M}(P_{\s M}^{T}MP_{\s M})^{-1}P_{\s M}^{T}M\big)\\
&=M^{-1}\widetilde{A}\big(I-VU_{1}(U_{1}^{T}V^{T}MVU_{1})^{-1}U_{1}^{T}V^{T}M\big)\\
&=M^{-1}\widetilde{A}(I-VU_{1}U_{1}^{T}V^{T}M)\\
&=M^{-1}\widetilde{A}(I-VU_{1}U_{1}^{T}V^{-1})\\
&=M^{-1}\widetilde{A}VU_{2}U_{2}^{T}V^{-1}\\
&=V\Lambda U_{2}U_{2}^{T}V^{-1},
\end{align*}
where $\Lambda=\diag\big(0,\ldots,0,\mu_{n-r+1},\ldots,\mu_{n}\big)\in\mathbb{R}^{n\times n}$. Hence,
\begin{align*}
\lambda\big(\Lambda U_{2}U_{2}^{T}\big)&=\lambda\big(M^{-1}\widetilde{A}(I-\Pi)\big)\\
&=\lambda\big(M^{-\frac{1}{2}}\widetilde{A}M^{-\frac{1}{2}}M^{\frac{1}{2}}(I-\Pi)M^{-\frac{1}{2}}\big)\\
&=\lambda\big(M^{-\frac{1}{2}}\widetilde{A}M^{-\frac{1}{2}}(I-\Pi_{\s M})\big).
\end{align*}
This, together with~\eqref{MtAI-pi} and the positive definiteness of $X_{3}$, yields that $\Lambda U_{2}U_{2}^{T}$ has $n_{\rm c}$ zero eigenvalues and $n-n_{\rm c}$ positive eigenvalues. Applying~\eqref{identity} and Lemma~\ref{lem:poincare}, we obtain
\begin{align*}
\|E_{\rm\s TG}\|_{M}&=\sqrt{1-\lambda_{n_{\rm c}+1}\big(M^{-1}\widetilde{A}(I-\Pi)\big)}\\
&=\sqrt{1-\lambda_{n_{\rm c}+1}\big(\Lambda U_{2}U_{2}^{T}\big)}\\
&=\sqrt{1-\lambda_{1}\big(U_{2}^{T}\Lambda U_{2}\big)}\\
&\geq\sqrt{1-\mu_{n_{\rm c}+1}}.
\end{align*}

In particular, if $\mathcal{N}(R)=\Span\{A\mathbf{v}_{n_{\rm c}+1},\ldots,A\mathbf{v}_{n}\}$, then
\begin{displaymath}
V^{-1}P_{\s M}=V^{-1}M^{-1}A^{T}R^{T}=V^{T}A^{T}R^{T}=(RAV)^{T}=\begin{pmatrix}
R_{\rm c}^{T} \\
0
\end{pmatrix},
\end{displaymath}
where $R_{\rm c}=RA(\mathbf{v}_{1},\ldots,\mathbf{v}_{n_{\rm c}})\in\mathbb{R}^{n_{\rm c}\times n_{\rm c}}$ is nonsingular. We then have
\begin{align*}
U_{2}U_{2}^{T}&=I-U_{1}U_{1}^{T}\\
&=I-V^{-1}P_{\s M}\big(P_{\s M}^{T}V^{-T}V^{-1}P_{\s M}\big)^{-1}P_{\s M}^{T}V^{-T}\\
&=I-\begin{pmatrix}
R_{\rm c}^{T} \\
0
\end{pmatrix}R_{\rm c}^{-T}R_{\rm c}^{-1}\big(R_{\rm c} \,\ 0\big)\\
&=\begin{pmatrix}
0 & 0 \\
0 & I_{n-n_{\rm c}}
\end{pmatrix}.
\end{align*}
Thus,
\begin{displaymath}
\|E_{\rm\s TG}\|_{M}=\sqrt{1-\lambda_{n_{\rm c}+1}\big(\Lambda U_{2}U_{2}^{T}\big)}=\sqrt{1-\mu_{n_{\rm c}+1}}.
\end{displaymath}
This completes the proof.
\end{proof}

The second application of~\eqref{identity} is to analyze the influence of $\mathcal{R}(R^{T})$ on $\|E_{\rm\s TG}\|_{M}$. The following lemma is needed in our analysis; see, e.g.,~\cite[Corollary~4.3.5]{Horn2013}.

\begin{lemma}\label{lem:H1H2}
Let $H_{1},H_{2}\in\mathbb{C}^{n\times n}$ be Hermitian. If $H_{2}$ is singular, then
\begin{displaymath}
\lambda_{i}(H_{1}+H_{2})\leq\lambda_{i+\rank(H_{2})}(H_{1})
\end{displaymath}
for all $i=1,\ldots,n-\rank(H_{2})$.
\end{lemma}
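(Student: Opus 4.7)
The plan is to prove the lemma directly from the Courant--Fischer min-max characterization of Hermitian eigenvalues, using the fact that $\Null(H_{2})$ has dimension $n-\rank(H_{2})$. In the ascending-order convention adopted in the paper, the characterization reads
\begin{equation*}
\lambda_{k}(A) = \min_{\substack{V \subset \mathbb{C}^{n} \\ \dim V = k}} \ \max_{v \in V,\, v \neq 0} \frac{v^{\ast}Av}{v^{\ast}v}
\end{equation*}
for any $n\times n$ Hermitian matrix $A$ and any index $1\leq k\leq n$. Writing $r:=\rank(H_{2})$, my first step is to fix an $(i+r)$-dimensional subspace $V_{0}\subset\mathbb{C}^{n}$ at which this minimum is attained for $\lambda_{i+r}(H_{1})$; concretely, $V_{0}$ may be taken as the span of the first $i+r$ orthonormal eigenvectors of $H_{1}$. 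This choice is legitimate precisely when $i+r\leq n$, which matches the stated range $1\leq i\leq n-r$.

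Next I would invoke the elementary subspace dimension bound
\begin{equation*}
\dim\bigl(V_{0}\cap\Null(H_{2})\bigr) \geq \dim V_{0}+\dim\Null(H_{2})-n = (i+r)+(n-r)-n = i,
\end{equation*}
and pick any $i$-dimensional subspace $V\subseteq V_{0}\cap\Null(H_{2})$. Every $v\in V$ lies in $\Null(H_{2})$, so $v^{\ast}(H_{1}+H_{2})v=v^{\ast}H_{1}v$, and therefore
\begin{equation*}
\max_{v\in V,\, v\neq 0} \frac{v^{\ast}(H_{1}+H_{2})v}{v^{\ast}v} = \max_{v\in V,\, v\neq 0} \frac{v^{\ast}H_{1}v}{v^{\ast}v} \leq \max_{v\in V_{0},\, v\neq 0} \frac{v^{\ast}H_{1}v}{v^{\ast}v} = \lambda_{i+r}(H_{1}).
\end{equation*}
Because $V$ is an admissible test subspace for the min-max expression defining $\lambda_{i}(H_{1}+H_{2})$, this chain of inequalities yields $\lambda_{i}(H_{1}+H_{2})\leq\lambda_{i+r}(H_{1})$, as claimed.

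I do not anticipate any real technical obstacle: the proof reduces to two applications of Courant--Fischer together with a standard subspace dimension count, and the singularity hypothesis on $H_{2}$ enters only through $\dim\Null(H_{2})=n-r$. An alternative route is to invoke the general Weyl inequality $\lambda_{i}(H_{1}+H_{2})\leq\lambda_{i+r}(H_{1})+\lambda_{n-r}(H_{2})$ and then observe that the $n-r$ zero eigenvalues of $H_{2}$ force $\lambda_{n-r}(H_{2})\leq 0$ (regardless of the signature of $H_{2}$). The min-max approach sketched above is preferable because it is self-contained and makes transparent the geometric reason why the rank of $H_{2}$ shifts the eigenvalue index of $H_{1}$ by exactly $r$.
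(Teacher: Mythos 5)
Your proof is correct. Note that the paper does not actually prove this lemma: it is quoted as a known result with a pointer to Horn and Johnson (Corollary 4.3.5), so there is no in-paper argument to compare against. Your primary route---Courant--Fischer in the ascending convention, the choice of $V_{0}$ as the span of the first $i+r$ eigenvectors of $H_{1}$, the dimension count $\dim(V_{0}\cap\Null(H_{2}))\geq i$, and the observation that the quadratic forms of $H_{1}+H_{2}$ and $H_{1}$ agree on $\Null(H_{2})$---is a complete and self-contained proof, and the index range $i\leq n-r$ is exactly what is needed for $V_{0}$ to exist. Your alternative route is essentially the standard textbook derivation: it follows from the Weyl inequality that the paper itself records as Lemma~\ref{lem:Weyl} (take $\ell=\rank(H_{2})$ there), combined with the correct observation that a Hermitian matrix of rank $r$ has at most $r$ positive eigenvalues, so its $(n-r)$th smallest eigenvalue is nonpositive. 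Either argument would serve as a proof of the cited result; the min-max version has the advantage of being independent of Lemma~\ref{lem:Weyl}, while the Weyl version is shorter given that the paper already states that lemma.
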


The following theorem shows that the convergence factor $\|E_{\rm\s TG}\|_{M}$ is nonincreasing as $\mathcal{R}(R^{T})$ expands.

\begin{theorem}
Let $\widehat{R}\in\mathbb{R}^{\hat{n}_{\rm c}\times n}\,(n_{\rm c}\leq\hat{n}_{\rm c}<n)$ be of full row rank, and define
\begin{displaymath}
\widehat{P}_{\s M}:=M^{-1}A^{T}\widehat{R}^{T}.
\end{displaymath}
If
\begin{displaymath}
\mathcal{R}(R^{T})\subseteq\mathcal{R}(\widehat{R}^{T}),
\end{displaymath}
then
\begin{equation}\label{hatETG-ETG}
\|\widehat{E}_{\rm\s TG}\|_{M}\leq\|E_{\rm\s TG}\|_{M},
\end{equation}
where
\begin{displaymath}
\widehat{E}_{\rm\s TG}=\big(I-\widehat{P}_{\s M}(\widehat{R}A\widehat{P}_{\s M})^{-1}\widehat{R}A\big)(I-M^{-1}A)
\end{displaymath}
and $E_{\rm\s TG}$ is given by~\eqref{ETG}.
\end{theorem}

\begin{proof}
The condition $\mathcal{R}(R^{T})\subseteq\mathcal{R}(\widehat{R}^{T})$ implies that there exists $Y\in\mathbb{R}^{\hat{n}_{\rm c}\times n_{\rm c}}$ of full column rank such that
\begin{displaymath}
R^{T}=\widehat{R}^{T}Y.
\end{displaymath}
Let $\widehat{Y}\in\mathbb{R}^{\hat{n}_{\rm c}\times\hat{n}_{\rm c}}$ be nonsingular such that
\begin{displaymath}
Y=\widehat{Y}\begin{pmatrix}
I_{n_{\rm c}} \\
0
\end{pmatrix}.
\end{displaymath}
Then
\begin{displaymath}
R^{T}=\widehat{R}^{T}\widehat{Y}\begin{pmatrix}
I_{n_{\rm c}} \\
0
\end{pmatrix},
\end{displaymath}
and hence there exists $Z_{0}\in\mathbb{R}^{n\times(\hat{n}_{\rm c}-n_{\rm c})}$ of full column rank such that
\begin{equation}\label{Rhat-R}
\widehat{R}^{T}=\big(R^{T} \,\ Z_{0}\big)\widehat{Y}^{-1}.
\end{equation}

From the proof of Theorem~\ref{thm:identity}, we have
\begin{align*}
\sigma_{\rm\s TG}&=\lambda_{n_{\rm c}+1}\big(M^{-1}\widetilde{A}\big(I-P_{\s M}(P_{\s M}^{T}MP_{\s M})^{-1}P_{\s M}^{T}M\big)\big)\\
&=\lambda_{n_{\rm c}+1}\big(\widetilde{A}\big(I-P_{\s M}(P_{\s M}^{T}MP_{\s M})^{-1}P_{\s M}^{T}M\big)M^{-1}\big)\\
&=\lambda_{n_{\rm c}+1}\big(\widetilde{A}^{\frac{1}{2}}\big(M^{-1}-P_{\s M}(P_{\s M}^{T}MP_{\s M})^{-1}P_{\s M}^{T}\big)\widetilde{A}^{\frac{1}{2}}\big).
\end{align*}
Similarly,
\begin{displaymath}
\|\widehat{E}_{\rm\s TG}\|_{M}=\sqrt{1-\hat{\sigma}_{\rm\s TG}}
\end{displaymath}
with
\begin{displaymath}
\hat{\sigma}_{\rm\s TG}=\lambda_{\hat{n}_{\rm c}+1}\big(\widetilde{A}^{\frac{1}{2}}\big(M^{-1}-\widehat{P}_{\s M}(\widehat{P}_{\s M}^{T}M\widehat{P}_{\s M})^{-1}\widehat{P}_{\s M}^{T}\big)\widetilde{A}^{\frac{1}{2}}\big).
\end{displaymath}

By~\eqref{Rhat-R}, we have
\begin{displaymath}
\widehat{P}_{\s M}=\big(P_{\s M} \,\ Z\big)\widehat{Y}^{-1},
\end{displaymath}
where
\begin{displaymath}
Z=M^{-1}A^{T}Z_{0}.
\end{displaymath}
Let
\begin{align*}
D_{\s M}&=\widehat{P}_{\s M}(\widehat{P}_{\s M}^{T}M\widehat{P}_{\s M})^{-1}\widehat{P}_{\s M}^{T}-P_{\s M}(P_{\s M}^{T}MP_{\s M})^{-1}P_{\s M}^{T},\\
L_{\s M}&=\begin{pmatrix}
I_{n_{\rm c}} & 0 \\
-Z^{T}MP_{\s M}(P_{\s M}^{T}MP_{\s M})^{-1} & I_{\hat{n}_{\rm c}-n_{\rm c}}
\end{pmatrix},\\
S_{\s M}&=Z^{T}MZ-Z^{T}MP_{\s M}(P_{\s M}^{T}MP_{\s M})^{-1}P_{\s M}^{T}MZ.
\end{align*}
Then
\begin{align*}
D_{\s M}&=\big(P_{\s M} \,\ Z\big)\bigg[\begin{pmatrix}
P_{\s M}^{T}MP_{\s M} & P_{\s M}^{T}MZ \\
Z^{T}MP_{\s M} & Z^{T}MZ
\end{pmatrix}^{-1}-\begin{pmatrix}
(P_{\s M}^{T}MP_{\s M})^{-1} & 0 \\
0 & 0
\end{pmatrix}\bigg]\big(P_{\s M} \,\ Z\big)^{T}\\
&=\big(P_{\s M} \,\ Z\big)\bigg[L_{\s M}^{T}\begin{pmatrix}
(P_{\s M}^{T}MP_{\s M})^{-1} & 0 \\
0 & S_{\s M}^{-1}
\end{pmatrix}L_{\s M}-\begin{pmatrix}
(P_{\s M}^{T}MP_{\s M})^{-1} & 0 \\
0 & 0
\end{pmatrix}\bigg]\big(P_{\s M} \,\ Z\big)^{T}\\
&=\big(P_{\s M} \,\ Z\big)\begin{pmatrix}
-(P_{\s M}^{T}MP_{\s M})^{-1}P_{\s M}^{T}MZ \\
I_{\hat{n}_{\rm c}-n_{\rm c}}
\end{pmatrix}S_{\s M}^{-1}\begin{pmatrix}
-(P_{\s M}^{T}MP_{\s M})^{-1}P_{\s M}^{T}MZ \\
I_{\hat{n}_{\rm c}-n_{\rm c}}
\end{pmatrix}^{T}\big(P_{\s M} \,\ Z\big)^{T}.
\end{align*}
It follows that $\widetilde{A}^{\frac{1}{2}}D_{\s M}\widetilde{A}^{\frac{1}{2}}$ is symmetric positive semidefinite and
\begin{displaymath}
\rank\big(\widetilde{A}^{\frac{1}{2}}D_{\s M}\widetilde{A}^{\frac{1}{2}}\big)\leq\rank(D_{\s M})\leq\hat{n}_{\rm c}-n_{\rm c}.
\end{displaymath}
Applying Lemma~\ref{lem:H1H2}, we obtain
\begin{align*}
\sigma_{\rm\s TG}&=\lambda_{n_{\rm c}+1}\big(\widetilde{A}^{\frac{1}{2}}\big(M^{-1}-P_{\s M}(P_{\s M}^{T}MP_{\s M})^{-1}P_{\s M}^{T}\big)\widetilde{A}^{\frac{1}{2}}\big)\\
&=\lambda_{n_{\rm c}+1}\big(\widetilde{A}^{\frac{1}{2}}\big(M^{-1}-\widehat{P}_{\s M}(\widehat{P}_{\s M}^{T}M\widehat{P}_{\s M})^{-1}\widehat{P}_{\s M}^{T}\big)\widetilde{A}^{\frac{1}{2}}+\widetilde{A}^{\frac{1}{2}}D_{\s M}\widetilde{A}^{\frac{1}{2}}\big)\\
&\leq\lambda_{n_{\rm c}+1+\hat{n}_{\rm c}-n_{\rm c}}\big(\widetilde{A}^{\frac{1}{2}}\big(M^{-1}-\widehat{P}_{\s M}(\widehat{P}_{\s M}^{T}M\widehat{P}_{\s M})^{-1}\widehat{P}_{\s M}^{T}\big)\widetilde{A}^{\frac{1}{2}}\big)\\
&=\lambda_{\hat{n}_{\rm c}+1}\big(\widetilde{A}^{\frac{1}{2}}\big(M^{-1}-\widehat{P}_{\s M}(\widehat{P}_{\s M}^{T}M\widehat{P}_{\s M})^{-1}\widehat{P}_{\s M}^{T}\big)\widetilde{A}^{\frac{1}{2}}\big)\\
&=\hat{\sigma}_{\rm\s TG}.
\end{align*}
Thus,
\begin{displaymath}
\|E_{\rm\s TG}\|_{M}=\sqrt{1-\sigma_{\rm\s TG}}\geq\sqrt{1-\hat{\sigma}_{\rm\s TG}}=\|\widehat{E}_{\rm\s TG}\|_{M},
\end{displaymath}
which proves~\eqref{hatETG-ETG}.
\end{proof}

\section{Analysis of an inexact variant of Algorithm~\ref{alg:TG}} \label{sec:iTG}

In Algorithm~\ref{alg:TG}, the coarse-grid system to be solved is
\begin{equation}\label{coarse-pro}
A_{\rm c}\mathbf{e}_{\rm c}=\mathbf{r}_{\rm c}.
\end{equation}
In this subsection, we consider an inexact variant of Algorithm~\ref{alg:TG}, in which~\eqref{coarse-pro} is solved approximately. This variant is presented in Algorithm~\ref{alg:iTG} and consists of the following components:
\begin{itemize}[leftmargin=0.8cm]

\item $\mathbf{u}^{(0)}\in\mathbb{R}^{n}$ is an initial guess;

\item $\nu$ denotes the number of smoothing steps;

\item $M_{k}\in\mathbb{R}^{n\times n}\,(k=1,2,\ldots,\nu)$ are nonsingular smoothers;

\item $R\in\mathbb{R}^{n_{\rm c}\times n}$ is of full row rank;

\item $\mathscr{B}_{\rm c}\llbracket\cdot\rrbracket$ is a general mapping from $\mathbb{R}^{n_{\rm c}}$ to $\mathbb{R}^{n_{\rm c}}$;

\item $P_{\s X}:=X^{-1}A^{T}R^{T}$, where $X\in\mathbb{R}^{n\times n}$ is a generic SPD matrix.

\end{itemize}

\begin{algorithm}[!htbp]

\caption{\ Inexact two-grid method.}\label{alg:iTG}

\smallskip

\begin{algorithmic}[1]

\State $\nu$-Smoothing: $\mathbf{u}^{(k)}\gets\mathbf{u}^{(k-1)}+M_{k}^{-1}\big(\mathbf{f}-A\mathbf{u}^{(k-1)}\big)$ for $k=1,2,\ldots,\nu$
	
\smallskip

\State Restriction: $\mathbf{r}_{\rm c}^{(\nu)}\gets R\big(\mathbf{f}-A\mathbf{u}^{(\nu)}\big)$

\smallskip

\State Coarse-grid correction: $\widetilde{\mathbf{e}}_{\rm c}^{(\nu)}\gets\mathscr{B}_{\rm c}\big\llbracket\mathbf{r}_{\rm c}^{(\nu)}\big\rrbracket$

\smallskip

\State Prolongation: $\mathbf{u}_{\rm\s ITG}^{(\nu)}\gets\mathbf{u}^{(\nu)}+P_{\s X}\widetilde{\mathbf{e}}_{\rm c}^{(\nu)}$

\smallskip

\end{algorithmic}

\end{algorithm}

In particular, if $\mathscr{B}_{\rm c}\llbracket\cdot\rrbracket=A_{\rm c}^{-1}(\cdot)$ with
\begin{displaymath}
A_{\rm c}=RAP_{\s X}=P_{\s X}^{T}XP_{\s X},
\end{displaymath}
then the outputs of the correction and prolongation steps in Algorithm~\ref{alg:iTG} are denoted by $\mathbf{e}_{\rm c}^{(\nu)}$ and $\mathbf{u}_{\rm\s TG}^{(\nu)}$, respectively. That is,
\begin{equation}\label{Ae=r}
A_{\rm c}\mathbf{e}_{\rm c}^{(\nu)}=\mathbf{r}_{\rm c}^{(\nu)}
\end{equation}
and
\begin{equation}\label{utg}
\mathbf{u}_{\rm\s TG}^{(\nu)}=\mathbf{u}^{(\nu)}+P_{\s X}\mathbf{e}_{\rm c}^{(\nu)}.
\end{equation}

The error propagation of the smoothing process in Algorithm~\ref{alg:iTG} is given by
\begin{displaymath}
\mathbf{u}-\mathbf{u}^{(\nu)}=\big(I-M_{\nu}^{-1}A\big)\big(I-M_{\nu-1}^{-1}A\big)\cdots\big(I-M_{1}^{-1}A\big)\big(\mathbf{u}-\mathbf{u}^{(0)}\big).
\end{displaymath}
For any norm $\|\cdot\|_{X}$, we have
\begin{displaymath}
\big\|\mathbf{u}-\mathbf{u}^{(\nu)}\big\|_{X}\leq\big\|\big(I-M_{\nu}^{-1}A\big)\big(I-M_{\nu-1}^{-1}A\big)\cdots\big(I-M_{1}^{-1}A\big)\big\|_{X}\big\|\mathbf{u}-\mathbf{u}^{(0)}\big\|_{X}.
\end{displaymath}
An essential requirement for the smoothing process is
\begin{displaymath}
\big\|\mathbf{u}-\mathbf{u}^{(\nu)}\big\|_{X}\leq\big\|\mathbf{u}-\mathbf{u}^{(0)}\big\|_{X},
\end{displaymath}
that is, the smoothing process does not amplify the error, measured in the $X$-norm. By~\eqref{Ae=r} and~\eqref{utg}, we have
\begin{align*}
\mathbf{u}-\mathbf{u}_{\rm\s TG}^{(\nu)}&=\mathbf{u}-\mathbf{u}^{(\nu)}-P_{\s X}A_{\rm c}^{-1}\mathbf{r}_{\rm c}^{(\nu)}\\
&=\mathbf{u}-\mathbf{u}^{(\nu)}-P_{\s X}A_{\rm c}^{-1}RA\big(\mathbf{u}-\mathbf{u}^{(\nu)}\big)\\
&=\big(I-P_{\s X}A_{\rm c}^{-1}P_{\s X}^{T}X\big)\big(\mathbf{u}-\mathbf{u}^{(\nu)}\big).
\end{align*}
Then
\begin{displaymath}
\big\|\mathbf{u}-\mathbf{u}_{\rm\s TG}^{(\nu)}\big\|_{X}\leq\big\|I-P_{\s X}A_{\rm c}^{-1}P_{\s X}^{T}X\big\|_{X}\big\|\mathbf{u}-\mathbf{u}^{(\nu)}\big\|_{X}.
\end{displaymath}
Since $I-P_{\s X}A_{\rm c}^{-1}P_{\s X}^{T}X$ is an orthogonal projector with respect to the $X$-inner product, it follows that
\begin{displaymath}
\big\|I-P_{\s X}A_{\rm c}^{-1}P_{\s X}^{T}X\big\|_{X}=1,
\end{displaymath}
and hence
\begin{displaymath}
\big\|\mathbf{u}-\mathbf{u}_{\rm\s TG}^{(\nu)}\big\|_{X}\leq\big\|\mathbf{u}-\mathbf{u}^{(\nu)}\big\|_{X},
\end{displaymath}
that is, the correction process does not amplify the error. These discussions justify the assumptions in the following theorem, which provides a convergence estimate for Algorithm~\ref{alg:iTG}.

\begin{theorem}\label{thm:iTG}
Let $\tau_{\nu}\in(0,1]$ and $\rho_{\nu}\in(0,1)$ be parameters. Assume that
\begin{equation}\label{smoothing}
\big\|\mathbf{u}-\mathbf{u}^{(\nu)}\big\|_{X}\leq\tau_{\nu}\big\|\mathbf{u}-\mathbf{u}^{(0)}\big\|_{X},
\end{equation}
and that Algorithm~{\rm\ref{alg:iTG}} with $\mathscr{B}_{\rm c}\llbracket\cdot\rrbracket=A_{\rm c}^{-1}(\cdot)$ is convergent in the $X$-norm, i.e.,
\begin{equation}\label{TG-est}
\big\|\mathbf{u}-\mathbf{u}_{\rm\s TG}^{(\nu)}\big\|_{X}\leq\rho_{\nu}\big\|\mathbf{u}-\mathbf{u}^{(0)}\big\|_{X}.
\end{equation}
If
\begin{equation}\label{coarse-est}
\big\|\mathbf{e}_{\rm c}^{(\nu)}-\widetilde{\mathbf{e}}_{\rm c}^{(\nu)}\big\|_{A_{\rm c}}\leq\varepsilon_{\nu}\big\|\mathbf{e}_{\rm c}^{(\nu)}\big\|_{A_{\rm c}}
\end{equation}
for some $\varepsilon_{\nu}\in(0,1)$, then
\begin{equation}\label{iTG-est-1}
\big\|\mathbf{u}-\mathbf{u}_{\rm\s ITG}^{(\nu)}\big\|_{X}\leq\sqrt{\rho_{\nu}^{2}+\varepsilon_{\nu}^{2}(\tau_{\nu}^{2}-\rho_{\nu}^{2})}\,\big\|\mathbf{u}-\mathbf{u}^{(0)}\big\|_{X}.
\end{equation}
\end{theorem}

\begin{proof}
The condition~\eqref{coarse-est} implies
\begin{equation}\label{2er-est}
-2\widetilde{\mathbf{e}}_{\rm c}^{{(\nu)}^{T}}\mathbf{r}_{\rm c}^{(\nu)}+\big\|\widetilde{\mathbf{e}}_{\rm c}^{(\nu)}\big\|_{A_{\rm c}}^{2}\leq(\varepsilon_{\nu}^{2}-1)\big\|\mathbf{e}_{\rm c}^{(\nu)}\big\|_{A_{\rm c}}^{2}.
\end{equation}
Due to
\begin{displaymath}
\mathbf{u}-\mathbf{u}_{\rm\s ITG}^{(\nu)}=\mathbf{u}-\mathbf{u}^{(\nu)}-P_{\s X}\widetilde{\mathbf{e}}_{\rm c}^{(\nu)},
\end{displaymath}
it follows that
\begin{align*}
\big\|\mathbf{u}-\mathbf{u}_{\rm\s ITG}^{(\nu)}\big\|_{X}^{2}&=\big(\mathbf{u}-\mathbf{u}^{(\nu)}-P_{\s X}\widetilde{\mathbf{e}}_{\rm c}^{(\nu)}\big)^{T}X\big(\mathbf{u}-\mathbf{u}^{(\nu)}-P_{\s X}\widetilde{\mathbf{e}}_{\rm c}^{(\nu)}\big)\\
&=\big\|\mathbf{u}-\mathbf{u}^{(\nu)}\big\|_{X}^{2}-2\widetilde{\mathbf{e}}_{\rm c}^{{(\nu)}^{T}}P_{\s X}^{T}X\big(\mathbf{u}-\mathbf{u}^{(\nu)}\big)+\big\|\widetilde{\mathbf{e}}_{\rm c}^{(\nu)}\big\|_{A_{\rm c}}^{2}\\
&=\big\|\mathbf{u}-\mathbf{u}^{(\nu)}\big\|_{X}^{2}-2\widetilde{\mathbf{e}}_{\rm c}^{{(\nu)}^{T}}RA\big(\mathbf{u}-\mathbf{u}^{(\nu)}\big)+\big\|\widetilde{\mathbf{e}}_{\rm c}^{(\nu)}\big\|_{A_{\rm c}}^{2}\\
&=\big\|\mathbf{u}-\mathbf{u}^{(\nu)}\big\|_{X}^{2}-2\widetilde{\mathbf{e}}_{\rm c}^{{(\nu)}^{T}}\mathbf{r}_{\rm c}^{(\nu)}+\big\|\widetilde{\mathbf{e}}_{\rm c}^{(\nu)}\big\|_{A_{\rm c}}^{2}.
\end{align*}
By~\eqref{2er-est}, we have
\begin{align*}
\big\|\mathbf{u}-\mathbf{u}_{\rm\s ITG}^{(\nu)}\big\|_{X}^{2}&\leq\big\|\mathbf{u}-\mathbf{u}^{(\nu)}\big\|_{X}^{2}-(1-\varepsilon_{\nu}^{2})\big\|\mathbf{e}_{\rm c}^{(\nu)}\big\|_{A_{\rm c}}^{2}\\
&=\big\|\mathbf{u}-\mathbf{u}^{(\nu)}\big\|_{X}^{2}-(1-\varepsilon_{\nu}^{2})\big\|A_{\rm c}^{-1}\mathbf{r}_{\rm c}^{(\nu)}\big\|_{A_{\rm c}}^{2}\\
&=\big\|\mathbf{u}-\mathbf{u}^{(\nu)}\big\|_{X}^{2}-(1-\varepsilon_{\nu}^{2})\big\|A_{\rm c}^{-1}RA\big(\mathbf{u}-\mathbf{u}^{(\nu)}\big)\big\|_{A_{\rm c}}^{2}\\
&=\big\|\mathbf{u}-\mathbf{u}^{(\nu)}\big\|_{X}^{2}-(1-\varepsilon_{\nu}^{2})\big\|RA\big(\mathbf{u}-\mathbf{u}^{(\nu)}\big)\big\|_{A_{\rm c}^{-1}}^{2}\\
&=\big\|\mathbf{u}-\mathbf{u}^{(\nu)}\big\|_{X}^{2}-(1-\varepsilon_{\nu}^{2})\big\|P_{\s X}^{T}X\big(\mathbf{u}-\mathbf{u}^{(\nu)}\big)\big\|_{A_{\rm c}^{-1}}^{2}\\
&=\big\|\mathbf{u}-\mathbf{u}^{(\nu)}\big\|_{X}^{2}-(1-\varepsilon_{\nu}^{2})\big(\mathbf{u}-\mathbf{u}^{(\nu)}\big)^{T}X^{\frac{1}{2}}\Pi_{\s X}X^{\frac{1}{2}}\big(\mathbf{u}-\mathbf{u}^{(\nu)}\big),
\end{align*}
where
\begin{displaymath}
\Pi_{\s X}=X^{\frac{1}{2}}P_{\s X}A_{\rm c}^{-1}P_{\s X}^{T}X^{\frac{1}{2}}.
\end{displaymath}
Hence,
\begin{equation}\label{iTG-est-0}
\big\|\mathbf{u}-\mathbf{u}_{\rm\s ITG}^{(\nu)}\big\|_{X}^{2}\leq\big(\mathbf{u}-\mathbf{u}^{(\nu)}\big)^{T}X^{\frac{1}{2}}\big(I-(1-\varepsilon_{\nu}^{2})\Pi_{\s X}\big)X^{\frac{1}{2}}\big(\mathbf{u}-\mathbf{u}^{(\nu)}\big).
\end{equation}

Note that $\Pi_{\s X}$ is an $L^{2}$-orthogonal projector, i.e., $\Pi_{\s X}^{T}=\Pi_{\s X}=\Pi_{\s X}^{2}$. We then have
\begin{align*}
\big(\mathbf{u}-\mathbf{u}^{(\nu)}\big)^{T}X^{\frac{1}{2}}(I-\Pi_{\s X})X^{\frac{1}{2}}\big(\mathbf{u}-\mathbf{u}^{(\nu)}\big)&=\big\|(I-\Pi_{\s X})X^{\frac{1}{2}}\big(\mathbf{u}-\mathbf{u}^{(\nu)}\big)\big\|_{2}^{2}\\
&=\big\|\big(I-P_{\s X}A_{\rm c}^{-1}P_{\s X}^{T}X\big)\big(\mathbf{u}-\mathbf{u}^{(\nu)}\big)\big\|_{X}^{2}\\
&=\big\|\mathbf{u}-\mathbf{u}_{\rm\s TG}^{(\nu)}\big\|_{X}^{2},
\end{align*}
which, together with~\eqref{TG-est}, yields
\begin{equation}\label{TG-est-0}
\big(\mathbf{u}-\mathbf{u}^{(\nu)}\big)^{T}X^{\frac{1}{2}}(I-\Pi_{\s X})X^{\frac{1}{2}}\big(\mathbf{u}-\mathbf{u}^{(\nu)}\big)\leq\rho_{\nu}^{2}\big\|\mathbf{u}-\mathbf{u}^{(0)}\big\|_{X}^{2}.
\end{equation}
Using~\eqref{smoothing}, \eqref{iTG-est-0}, and~\eqref{TG-est-0}, we obtain
\begin{align*}
\big\|\mathbf{u}-\mathbf{u}_{\rm\s ITG}^{(\nu)}\big\|_{X}^{2}&\leq(1-\varepsilon_{\nu}^{2})\rho_{\nu}^{2}\big\|\mathbf{u}-\mathbf{u}^{(0)}\big\|_{X}^{2}+\varepsilon_{\nu}^{2}\big\|\mathbf{u}-\mathbf{u}^{(\nu)}\big\|_{X}^{2}\\
&\leq\big((1-\varepsilon_{\nu}^{2})\rho_{\nu}^{2}+\varepsilon_{\nu}^{2}\tau_{\nu}^{2}\big)\big\|\mathbf{u}-\mathbf{u}^{(0)}\big\|_{X}^{2},
\end{align*}
which gives~\eqref{iTG-est-1}.
\end{proof}

\begin{remark}\label{rmk:uniform}
In general, $\rho_{\nu}<\tau_{\nu}$, since the combination of smoothing and coarse-grid correction reduces the error more effectively than smoothing alone. Then
\begin{displaymath}
\sqrt{\rho_{\nu}^{2}+\varepsilon_{\nu}^{2}(\tau_{\nu}^{2}-\rho_{\nu}^{2})}<\tau_{\nu}\leq 1 \quad \forall\,\varepsilon_{\nu}\in(0,1),
\end{displaymath}
which means that the resulting inexact two-grid method is convergent for any accuracy parameter $\varepsilon_{\nu}\in(0,1)$. Furthermore, if $\rho_{\nu}\leq C$ and $\varepsilon_{\nu}\leq\varepsilon$ for some constants $C\in(0,1)$ and $\varepsilon\in(0,1)$, then
\begin{displaymath}
\sqrt{\rho_{\nu}^{2}+\varepsilon_{\nu}^{2}(\tau_{\nu}^{2}-\rho_{\nu}^{2})}\leq\sqrt{\rho_{\nu}^{2}+\varepsilon^{2}(1-\rho_{\nu}^{2})}\leq\sqrt{\varepsilon^{2}+(1-\varepsilon^{2})C^{2}}<1,
\end{displaymath}
that is, the factor $\sqrt{\rho_{\nu}^{2}+\varepsilon_{\nu}^{2}(\tau_{\nu}^{2}-\rho_{\nu}^{2})}$ has a uniform upper bound.
\end{remark}

As a corollary of Theorem~\ref{thm:iTG}, we have the following result.

\begin{corollary}\label{cor:iTG}
Under the conditions~\eqref{TG-est} and~\eqref{coarse-est}, if
\begin{equation}\label{utg-uv}
\big\|\mathbf{u}_{\rm\s TG}^{(\nu)}-\mathbf{u}^{(\nu)}\big\|_{X}\leq\eta_{\nu}\big\|\mathbf{u}-\mathbf{u}^{(0)}\big\|_{X}
\end{equation}
for some $\eta_{\nu}\in(0,1)$, then
\begin{equation}\label{iTG-est-2}
\big\|\mathbf{u}-\mathbf{u}_{\rm\s ITG}^{(\nu)}\big\|_{X}\leq\sqrt{\rho_{\nu}^{2}+\varepsilon_{\nu}^{2}\eta_{\nu}^{2}}\,\big\|\mathbf{u}-\mathbf{u}^{(0)}\big\|_{X}.
\end{equation}
\end{corollary}

\begin{proof}
In view of~\eqref{Ae=r} and~\eqref{utg}, we have
\begin{align*}
\big(\mathbf{u}-\mathbf{u}_{\rm\s TG}^{(\nu)}\big)^{T}X\big(\mathbf{u}_{\rm\s TG}^{(\nu)}-\mathbf{u}^{(\nu)}\big)&=\big(\mathbf{u}-\mathbf{u}^{(\nu)}-P_{\s X}\mathbf{e}_{\rm c}^{(\nu)}\big)^{T}XP_{\s X}\mathbf{e}_{\rm c}^{(\nu)}\\
&=\big(\mathbf{u}-\mathbf{u}^{(\nu)}-P_{\s X}A_{\rm c}^{-1}\mathbf{r}_{\rm c}^{(\nu)}\big)^{T}XP_{\s X}\mathbf{e}_{\rm c}^{(\nu)}\\
&=\big(\mathbf{u}-\mathbf{u}^{(\nu)}\big)^{T}\big(I-P_{\s X}A_{\rm c}^{-1}RA\big)^{T}XP_{\s X}\mathbf{e}_{\rm c}^{(\nu)}\\
&=\big(\mathbf{u}-\mathbf{u}^{(\nu)}\big)^{T}\big(I-P_{\s X}A_{\rm c}^{-1}P_{\s X}^{T}X\big)^{T}XP_{\s X}\mathbf{e}_{\rm c}^{(\nu)}\\
&=\big(\mathbf{u}-\mathbf{u}^{(\nu)}\big)^{T}\big(I-XP_{\s X}(P_{\s X}^{T}XP_{\s X})^{-1}P_{\s X}^{T}\big)XP_{\s X}\mathbf{e}_{\rm c}^{(\nu)}\\
&=0,
\end{align*}
which leads to
\begin{displaymath}
\big\|\mathbf{u}-\mathbf{u}^{(\nu)}\big\|_{X}^{2}=\big\|\mathbf{u}-\mathbf{u}_{\rm\s TG}^{(\nu)}\big\|_{X}^{2}+\big\|\mathbf{u}_{\rm\s TG}^{(\nu)}-\mathbf{u}^{(\nu)}\big\|_{X}^{2}.
\end{displaymath}
This, combined with~\eqref{TG-est} and~\eqref{utg-uv}, yields
\begin{equation}\label{smoothing-rhoeta}
\big\|\mathbf{u}-\mathbf{u}^{(\nu)}\big\|_{X}\leq\sqrt{\rho_{\nu}^{2}+\eta_{\nu}^{2}}\,\big\|\mathbf{u}-\mathbf{u}^{(0)}\big\|_{X},
\end{equation}
i.e., \eqref{smoothing} holds with $\tau_{\nu}=\sqrt{\rho_{\nu}^{2}+\eta_{\nu}^{2}}$. Note that~\eqref{iTG-est-1} remains valid even if $\tau_{\nu}>1$. Applying~\eqref{iTG-est-1} gives~\eqref{iTG-est-2}.
\end{proof}

\begin{remark}
Observe that~\eqref{iTG-est-2} is of interest mainly when $\eta_{\nu}\leq\sqrt{1-\rho_{\nu}^{2}}$, in which case $\sqrt{\rho_{\nu}^{2}+\varepsilon_{\nu}^{2}\eta_{\nu}^{2}}<1$ for any $\varepsilon_{\nu}\in(0,1)$. If $\eta_{\nu}>\sqrt{1-\rho_{\nu}^{2}}$, then the factor $\sqrt{\rho_{\nu}^{2}+\eta_{\nu}^{2}}$ in~\eqref{smoothing-rhoeta} is strictly greater than one, which implies that the smoothing process may amplify the error. However, even in the latter case, $\sqrt{\rho_{\nu}^{2}+\varepsilon_{\nu}^{2}\eta_{\nu}^{2}}<1$ may still hold, for instance, when $\varepsilon_{\nu}$ is small.
\end{remark}

\begin{remark}
Clearly, \eqref{smoothing}, \eqref{TG-est}, and~\eqref{utg-uv} characterize the pairwise distances, in the $X$-norm, among $\mathbf{u}$, $\mathbf{u}^{(\nu)}$, and $\mathbf{u}_{\rm\s TG}^{(\nu)}$. In addition, the proof of Corollary~\ref{cor:iTG} shows that $\mathbf{u}-\mathbf{u}_{\rm\s TG}^{(\nu)}$ and $\mathbf{u}_{\rm\s TG}^{(\nu)}-\mathbf{u}^{(\nu)}$ are orthogonal with respect to the $X$-inner product. In the two-dimensional case, these distance and orthogonality relations are illustrated in Figure~\ref{fig:circle}.
\end{remark}

\begin{figure}[!htbp]
	
\vskip -0.3cm

\centering

\includegraphics[width=0.46\textwidth]{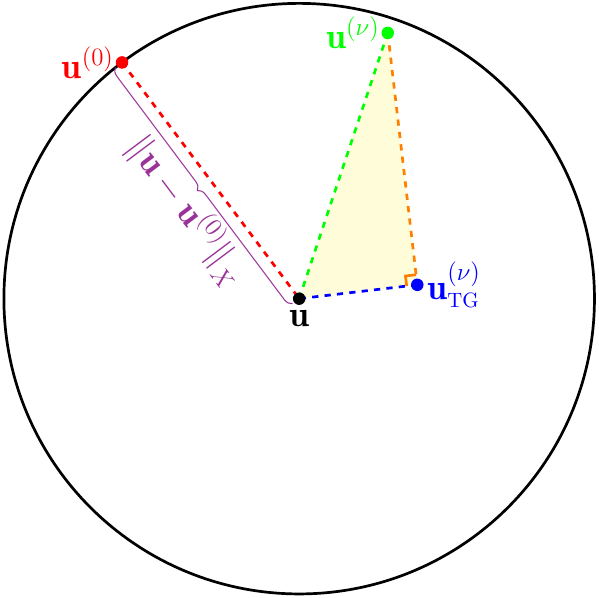}

\vskip -0.2cm

\caption{Two-dimensional illustration of~\eqref{smoothing}, \eqref{TG-est}, and~\eqref{utg-uv}.}

\label{fig:circle}

\vskip -0.25cm

\end{figure}

\begin{remark}
Note that the inexact two-grid theory developed in this section applies to more general systems, since the positive definiteness of $A$ is not used in the proofs of Theorem~\ref{thm:iTG} and Corollary~\ref{cor:iTG}.
\end{remark}

\section{Numerical experiments} \label{sec:numer}

Our analysis shows that $P_{\s X}=X^{-1}A^{T}R^{T}$ is a suitable choice for the prolongation matrix when the error is measured in the $X$-norm. Let $H$ and $S$ denote the symmetric and skew-symmetric parts of $A$, respectively; that is,
\begin{displaymath}
H=\frac{1}{2}(A+A^{T}) \quad \text{and} \quad S=\frac{1}{2}(A-A^{T}).
\end{displaymath}
Then
\begin{displaymath}
A=H+S.
\end{displaymath}
Since $H$ is SPD, a natural choice is $X=H$. The resulting $H$-norm generalizes the $A$-norm used in the SPD setting.

However, $P_{\s H}=H^{-1}A^{T}R^{T}$ may not be a practical choice, since computing $H^{-1}$ is often too costly. An alternative is to construct a practical prolongation by approximating $H^{-1}$ or its action on vectors. For example, one may approximate $H^{-1}$ by $\omega\diag(H)^{-1}$, where $\omega>0$ is a parameter. Substituting this approximation into the expression
\begin{equation}\label{PH}
P_{\s H}=H^{-1}(H-S)R^{T}=(I-H^{-1}S)R^{T}
\end{equation}
yields the practical prolongation matrix
\begin{equation}\label{PH-app}
\widetilde{P}_{\s H}^{\s (\omega)}=\big(I-\omega\diag(H)^{-1}S\big)R^{T}.
\end{equation}
In many applications, it suffices to approximate the action of $P_{\s H}$ on vectors, without explicitly constructing an approximation of $P_{\s H}$ itself. For any $\mathbf{v}_{\rm c}\in\mathbb{R}^{n_{\rm c}}$, we have
\begin{displaymath}
P_{\s H}\mathbf{v}_{\rm c}=R^{T}\mathbf{v}_{\rm c}-H^{-1}SR^{T}\mathbf{v}_{\rm c}.
\end{displaymath}
The term $H^{-1}SR^{T}\mathbf{v}_{\rm c}$ can be approximated by solving the SPD linear system
\begin{displaymath}
H\mathbf{x}=SR^{T}\mathbf{v}_{\rm c},
\end{displaymath}
for instance, using multigrid or conjugate gradient.

The prolongation matrix~\eqref{PH-app} can be viewed as a left multiplicative perturbation of the classical choice $R^{T}$. To compare the performance of $\widetilde{P}_{\s H}^{\s(\omega)}$ and $R^{T}$, we present a numerical example.

Consider the following convection-diffusion equation with homogeneous Dirichlet boundary conditions on the unit square $\Omega=(0,1)\times(0,1)$:
\begin{equation}\label{conv-diff}
\left\{
\begin{aligned}
-\Delta u+au_{x}+bu_{y}&=f \quad \text{in $\Omega$},\\
u&=0 \quad \text{on $\partial\Omega$},
\end{aligned}
\right.
\end{equation}
where $u=u(x,y)$, $f=f(x,y)$, and $a,b\geq 0$ are constants. The problem~\eqref{conv-diff} is discretized on a uniform grid with grid points
\begin{displaymath}
(x_{i},y_{j})=(ih,jh) \quad i,j=0,1,\ldots,m+1,
\end{displaymath}
where $h=1/(m+1)$. Specifically, the Laplacian operator is discretized using the standard five-point difference scheme, and the derivatives $u_{x}$ and $u_{y}$ are discretized using the first-order upwind difference scheme. The resulting stencil takes the form
\begin{displaymath}
\frac{1}{h^{2}}\begin{bmatrix}
& -1 &  \\
-1-2\alpha & 4+2(\alpha+\beta) & -1 \\
& -1-2\beta & 
\end{bmatrix}_{h},
\end{displaymath}
where
\begin{displaymath}
\alpha=\frac{ah}{2} \quad \text{and} \quad \beta=\frac{bh}{2}.
\end{displaymath}
Let
\begin{align*}
T_{1}&={\rm tridiag}(-1-2\alpha,\,4+2(\alpha+\beta),\,-1)\in\mathbb{R}^{m\times m},\\
T_{2}&={\rm tridiag}(0,\,0,\,-1)\in\mathbb{R}^{m\times m}.
\end{align*}
In our experiments, all functions and their approximations evaluated at $(x_{i},y_{j})$ are ordered lexicographically. This yields the linear system
\begin{displaymath}
A\mathbf{u}=\mathbf{f},
\end{displaymath}
where
\begin{displaymath}
A=I_{m}\otimes T_{1}+\big(T_{2}+(1+2\beta)T_{2}^{T}\big)\otimes I_{m}
\end{displaymath}
and $\mathbf{f}\in\mathbb{R}^{m^{2}}$ is formed by ordering the sequence $\big\{h^{2}f(x_{i},y_{j})\big\}_{i,j=1}^{m}$ lexicographically. Here, $\otimes$ denotes the Kronecker product.

Let $m$ be odd and $\mathcal{I}=\{1,3,\ldots,m\}$. The coarse points are chosen as $(x_{i},y_{j})$ with indices $(i,j)\in\mathcal{I}\times\mathcal{I}$; see the black solid circles in Figure~\ref{fig:transfer}. The main experimental settings are as follows:
\begin{itemize}[leftmargin=0.8cm]

\item $f$ is chosen as the zero function;

\item $\mathbf{u}^{(0)}$ is generated randomly and then fixed;

\item $M_{k}=1.5\diag(A)$ for $k=1,2,\ldots,\nu$, where $\nu=4$;

\item $R=R_{0}\otimes R_{0}$, where
\begin{displaymath}
R_{0}=\frac{1}{4}\begin{pmatrix}
2 & 1 &   &   &   &   &   &   & \\
& 1 & 2 & 1 &   &   &   &   & \\
&   &   &   & \ddots &   &   &   & \\
&   &   &   &        & 1 & 2 & 1 & \\
&   &   &   &        &   &   & 1 & 2
\end{pmatrix}\in\mathbb{R}^{\frac{m+1}{2}\times m}
\end{displaymath}
is based on the restriction stencil
\begin{displaymath}
\frac{1}{16}\begin{bmatrix}
1 & 2 & 1 \\
2 & 4 & 2 \\
1 & 2 & 1 \\
\end{bmatrix}_{h}^{2h};
\end{displaymath}

\item $\widetilde{P}_{\s H}^{\s(\omega)}$ is given by~\eqref{PH-app}.

\end{itemize}

\begin{figure}[!htbp]

\centering

\includegraphics[width=0.8\textwidth]{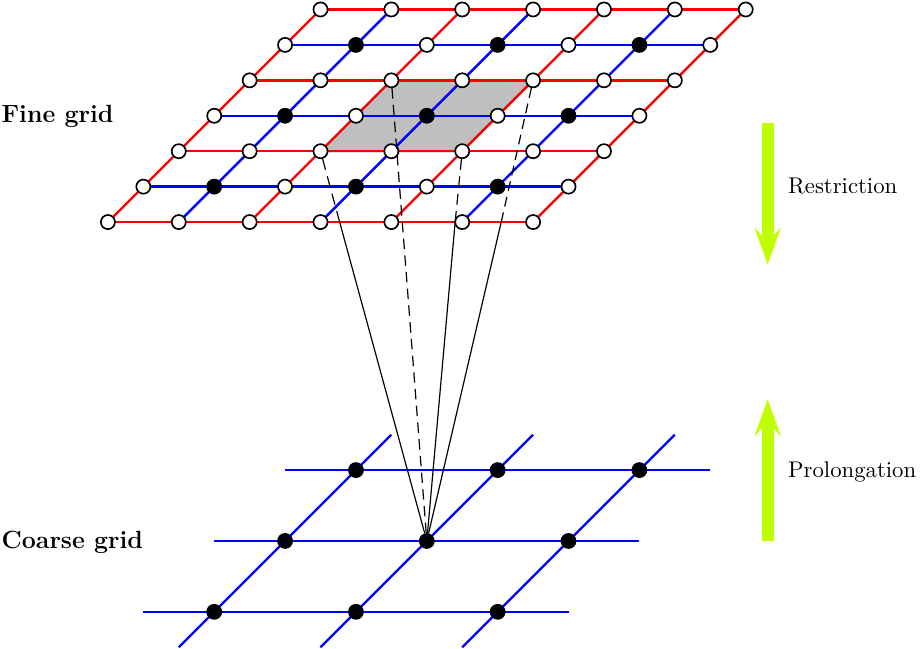}

\vskip -0.1cm

\caption{Transfer between fine and coarse grids.}

\label{fig:transfer}

\vskip -0.1cm

\end{figure}

The numerical results in Table~\ref{tab:comp} show that:
\begin{itemize}[leftmargin=0.8cm]

\item In the convection-dominated regime (when $a$ and $b$ are large), the prolongation matrix $\widetilde{P}_{\s H}^{\s(2.5)}$ yields smaller convergence factors than the classical choice $R^{T}$;

\item The two-grid method with $\widetilde{P}_{\s H}^{\s(2.5)}$ exhibits greater robustness (its convergence factor varies less as $a$ increases) than that with $R^{T}$.

\end{itemize}

\renewcommand{\arraystretch}{1.08}

\begin{table}[h!]

\begin{center}

\begin{tabular}{c|c|cccccc}
\hline
\multirow{2}*{} &\multirow{2}*{} & \multicolumn{6}{c}{PDE coefficients $(a=b)$} \\ \cline{3-8}
& & $a=10^{1}$ & $a=10^{2}$ & $a=10^{3}$ & $a=10^{4}$ & $a=10^{5}$ & $a=10^{6}$ \\
\hline
\multirow{2}*{$h=1/128$} & $R^{T}$ & $0.20$ & $0.14$ & $0.19$ & $0.35$ & $0.37$ & $0.37$ \\
& $\widetilde{P}_{\s H}^{\s(2.5)}$ & $0.20$ & $0.18$ & $0.10$ & $0.14$ & $0.15$ & $0.15$ \\
\hline
\multirow{2}*{$h=1/256$} & $R^{T}$ & $0.20$ & $0.18$ & $0.09$ & $0.33$ & $0.37$ & $0.38$ \\
& $\widetilde{P}_{\s H}^{\s(2.5)}$ & $0.20$ & $0.19$ & $0.10$ & $0.13$ & $0.16$ & $0.16$ \\
\hline
\multirow{2}*{$h=1/512$} & $R^{T}$ & $0.20$ & $0.19$ & $0.07$ & $0.29$ & $0.37$ & $0.38$ \\
& $\widetilde{P}_{\s H}^{\s(2.5)}$ & $0.20$ & $0.20$ & $0.14$ & $0.12$ & $0.16$ & $0.16$ \\
\hline
\multirow{2}*{$h=1/1024$} & $R^{T}$ & $0.20$ & $0.20$ & $0.12$ & $0.23$ & $0.36$ & $0.38$ \\
& $\widetilde{P}_{\s H}^{\s(2.5)}$ & $0.20$ & $0.20$ & $0.17$ & $0.11$ & $0.15$ & $0.16$ \\
\hline
\multirow{2}*{$h=1/2048$} & $R^{T}$ & $0.20$ & $0.20$ & $0.17$ & $0.12$ & $0.35$ & $0.38$ \\
& $\widetilde{P}_{\s H}^{\s(2.5)}$ & $0.20$ & $0.20$ & $0.19$ & $0.10$ & $0.14$ & $0.16$ \\
\hline
\end{tabular}
	
\end{center}

\bigskip

\caption{Comparison of two-grid asymptotic convergence factors.}

\label{tab:comp}

\vskip -0.5cm

\end{table}

\begin{remark}
The prolongation matrix $\widetilde{P}_{\s H}^{\s(\omega)}$ is obtained by replacing $H^{-1}$ in~\eqref{PH} with $\omega\diag(H)^{-1}$, which is in fact a scalar matrix. In our numerical tests, we take $\omega=2.5$. A better prolongation may be obtained by optimizing $\omega$. Other strategies for approximating $H^{-1}$ are also possible.
\end{remark}

\begin{remark}
Since $S$ is skew-symmetric and $\diag(H)=\big(4+2(\alpha+\beta)\big)I_{m^{2}}$, every eigenvalue of $I-\omega\diag(H)^{-1}S$ has real part one; hence this matrix is nonsingular. Therefore, $\widetilde{P}_{\s H}^{\s (\omega)}$ has full column rank whenever $R$ has full row rank.
\end{remark}

\section{Conclusions} \label{sec:con}

In this paper, we present a convergence analysis of two-grid methods for nonsymmetric positive definite systems. For the case of an exact coarse solver, we derive an identity for the two-grid convergence factor in a smoother-induced norm. The identity is used to analyze the optimality of the restriction matrix and the influence of its row space on the convergence factor. More generally, we develop a convergence theory for two-grid methods with inexact coarse solvers. The theory is established in a generic norm and applies to more general systems. Our numerical results show that the prolongation matrix motivated by our theory can outperform the classical choice in some cases. This offers new insights into the design of multigrid methods.

\section*{Acknowledgments}

The author would like to thank Prof.~Chen-Song Zhang (AMSS, Chinese Academy of Sciences) and Prof.~Xiaozhe Hu (Tufts University) for their valuable comments and suggestions. This work was partially supported by the National Natural Science Foundation of China (Grant No.~12401479), the Natural Science Foundation of Jiangsu Province (Grant No.~BK20241257), and the Start-up Research Fund of Southeast University (Grant No.~RF1028623372).

\bibliographystyle{amsplain}

\begin{thebibliography}{10}

\bibitem {Brannick2018} J. Brannick, F. Cao, K. Kahl, R. D. Falgout, and X. Hu, \textit{Optimal interpolation and compatible relaxation in classical algebraic multigrid}, SIAM J. Sci. Comput. \textbf{40} (2018), A1473--A1493.

\bibitem {Brezina2010} M. Brezina, T. A. Manteuffel, S. F. McCormick, J. W. Ruge, and G. Sanders, \textit{Towards adaptive smoothed aggregation ($\alpha$SA) for nonsymmetric problems}, SIAM J. Sci. Comput. \textbf{32} (2010), 14--39.

\bibitem {Briggs2000} W. L. Briggs, V. E. Henson, and S. F. McCormick, \textit{A Multigrid Tutorial}, 2nd ed., SIAM, Philadelphia, 2000.

\bibitem {Falgout2004} R. D. Falgout and P. S. Vassilevski, \textit{On generalizing the algebraic multigrid framework}, SIAM J. Numer. Anal. \textbf{42} (2004), 1669--1693.

\bibitem {Falgout2005} R. D. Falgout, P. S. Vassilevski, and L. T. Zikatanov, \textit{On two-grid convergence estimates}, Numer. Linear Algebra Appl. \textbf{12} (2005), 471--494.

\bibitem {Hackbusch1985} W. Hackbusch, \textit{Multi-Grid Methods and Applications}, Springer-Verlag, Berlin, 1985.

\bibitem {Horn2013} R. A. Horn and C. R. Johnson, \textit{Matrix Analysis}, 2nd ed., Cambridge University Press, Cambridge, UK, 2013.

\bibitem {Krzysik2026} O. A. Krzysik, B. S. Southworth, G. A. Wimmer, A. Ali, J. Brannick, and K. Kahl, \textit{Optimal transfer operators in algebraic two-level methods for nonsymmetric and indefinite problems}, SIAM J. Matrix Anal. Appl. \textbf{47} (2026), 824--851.

\bibitem {Lottes2017} J. Lottes, \textit{Towards Robust Algebraic Multigrid Methods for Nonsymmetric Problems}, Springer Theses, Springer International Publishing AG, 2017.

\bibitem {MacLachlan2014} S. P. MacLachlan and L. N. Olson, \textit{Theoretical bounds for algebraic multigrid performance: Review and analysis}, Numer. Linear Algebra Appl. \textbf{21} (2014), 194--220.

\bibitem {Manteuffel2019-1} T. A. Manteuffel, S. M\"{u}nzenmaier, J. W. Ruge, and B. S. Southworth, \textit{Nonsymmetric reduction-based algebraic multigrid}, SIAM J. Sci. Comput. \textbf{41} (2019), S242--S268.

\bibitem {Manteuffel2017} T. A. Manteuffel, L. N. Olson, J. B. Schroder, and B. S. Southworth, \textit{A root-node--based algebraic multigrid method}, SIAM J. Sci. Comput. \textbf{39} (2017), S723--S756.

\bibitem {Manteuffel2019-2} T. A. Manteuffel and B. S. Southworth, \textit{Convergence in norm of nonsymmetric algebraic multigrid}, SIAM J. Sci. Comput. \textbf{41} (2019), S269--S296.

\bibitem {Notay2015} Y. Notay, \textit{Algebraic theory of two-grid methods}, Numer. Math. Theory Methods Appl. \textbf{8} (2015), 168--198.

\bibitem {Notay2020} Y. Notay, \textit{Analysis of two-grid methods: The nonnormal case}, Math. Comp. \textbf{89} (2020), 807--827.

\bibitem {Southworth2024} B. S. Southworth and T. A. Manteuffel, \textit{On compatible transfer operators in nonsymmetric algebraic multigrid}, SIAM J. Matrix Anal. Appl. \textbf{45} (2024), 1245--1258.

\bibitem {Trottenberg2001} U. Trottenberg, C. W. Oosterlee, and A. Sch\"{u}ller, \textit{Multigrid}, Academic Press, London, 2001.

\bibitem {Vassilevski2008} P. S. Vassilevski, \textit{Multilevel Block Factorization Preconditioners: Matrix-Based Analysis and Algorithms for Solving Finite Element Equations}, Springer, New York, 2008.

\bibitem {Xu1992} J. Xu, \textit{Iterative methods by space decomposition and subspace correction}, SIAM Rev. \textbf{34} (1992), 581--613.

\bibitem {XZ2002} J. Xu and L. T. Zikatanov, \textit{The method of alternating projections and the method of subspace corrections in Hilbert space}, J. Amer. Math. Soc. \textbf{15} (2002), 573--597.

\bibitem {XZ2017} J. Xu and L. T. Zikatanov, \textit{Algebraic multigrid methods}, Acta Numer. \textbf{26} (2017), 591--721.

\bibitem {XXF2025-1} X. Xu, \textit{Convergence of VW-cycle and WV-cycle multigrid methods}, BIT \textbf{65} (2025), article no.~4.

\bibitem {XXF2025-2} X. Xu, \textit{Convergence analysis of two-level methods with general coarse solvers}, BIT \textbf{65} (2025), article no.~19.

\bibitem {XXF2018} X. Xu and C.-S. Zhang, \textit{On the ideal interpolation operator in algebraic multigrid methods}, SIAM J. Numer. Anal. \textbf{56} (2018), 1693--1710.

\bibitem {XXF2022-1} X. Xu and C.-S. Zhang, \textit{Convergence analysis of inexact two-grid methods: A theoretical framework}, SIAM J. Numer. Anal. \textbf{60} (2022), 133--156.

\bibitem {XXF2022-2} X. Xu and C.-S. Zhang, \textit{A new analytical framework for the convergence of inexact two-grid methods}, SIAM J. Matrix Anal. Appl. \textbf{43} (2022), 512--533.

\bibitem {Zikatanov2008} L. T. Zikatanov, \textit{Two-sided bounds on the convergence rate of two-level methods}, Numer. Linear Algebra Appl. \textbf{15} (2008), 439--454.

\end{thebibliography}

\end{document}